\theoremstyle{plain}
\newtheorem{theorem}{Theorem}[section]
\newtheorem{proposition}[theorem]{Proposition}
\newtheorem{lemma}[theorem]{Lemma}
\theoremstyle{definition}
\newtheorem{definition}[theorem]{Definition}
\newtheorem{remark}[theorem]{Remark}
\theoremstyle{plain}
\newcommand{\comment}[1]{}
\newcommand{\ol}{\ensuremath{\overline}}
\newcommand{\ora}{\ensuremath{\overrightarrow}}
\newcommand{\what}{\ensuremath{\widehat}}
\newcommand{\wtilde}{\ensuremath{\widetilde}}
\newcommand{\ve}{\ensuremath{\varepsilon}}
\newcommand{\B}{\ensuremath{\mathbb{B}}}
\newcommand{\R}{\ensuremath{\mathbb{R}}}
\newcommand{\U}{\ensuremath{\mathbb{U}}}
\newcommand{\cB}{\ensuremath{\mathcal{B}}}
\newcommand{\cP}{\ensuremath{\mathcal{P}}}
\newcommand{\cM}{\ensuremath{\mathcal{M}}}
\newcommand{\cU}{\ensuremath{\mathcal{U}}}
\newcommand{\cA}{\ensuremath{\mathcal{A}}}
\newcommand{\cO}{\ensuremath{\mathcal{O}}}
\newcommand{\cX}{\ensuremath{\mathcal{X}}}
\newcommand{\pF}{\ensuremath{\partial_F}}
\newcommand{\pL}{\ensuremath{\partial_L}}
\newcommand{\weakly}{\ensuremath{\stackrel{w}{\longrightarrow}}}
\begin{document}

\title[Maximum Principle on Infinite Dimensional Manifolds]{Pontryagin Maximum Principle for Control Systems on Infinite Dimensional Manifolds}

\author[Kipka]{Robert J. Kipka}
\address[Kipka]{Department of Mathematics, Western Michigan University, Kalamazoo, MI, 49008-5248}
\email{robert.j.kipka@wmich.edu}

\author[Ledyaev]{Yuri S. Ledyaev}
\address[Ledyaev]{Department of Mathematics, Western Michigan University, Kalamazoo, MI, 49008-5248}
\email{iouri.lediaev@wmich.edu}





\maketitle

\begin{abstract}
 We discuss a mathematical framework for analysis of optimal control
problems on infinite-dimensional manifolds. Such problems arise in study
of optimization for  partial differential equations with some  symmetry.
It is shown that some nonsmooth analysis methods and Lagrangian charts techniques can be used for study of global variations of optimal trajectories of
such control systems and derivation of maximum principle for them.
\end{abstract}

\section{Introduction}

In this paper we study control problems in which the state evolves on an
infinite-dimensional  manifold $M$ which is  modeled over  Banach space $E$.
The problem of optimal control with which we will concern ourselves is the following Mayer problem:

\textbf{Problem ($\mathcal{P}$):} \emph{Minimize $\ell(q(0), q(T))$ subject to dynamic constraint}
\begin{equation}
\label{eq:dynamic-constraint}
\dot{q}(t) = f(t, q(t), u(t)), \quad q(0)=q_0
\end{equation}
\emph{and endpoint constraints}
\begin{equation}
\label{eq:state-constraint}
\left(q(0), q(T)\right) \in S \subset M \times M.
\end{equation}
where $q(t)$ describes a state of the control system \eqref{eq:dynamic-constraint}, $u(t)$ is a control
function taking values in a set $\U$, $f(t,q,u)$ is a parametric family
of vector fields $f: [0,T]\times M\times \U \to TM$ describing a dynamics of the system, the set $S$ describes
end constraints for trajectory of \eqref{eq:dynamic-constraint} and the function $\ell$ is a cost functional.

Study of infinite-dimensional optimal control problems  in which a state vector
$q(t)$ moves in a linear Banach space $E$ have a long history and it continues
to attract  attention due to numerous applications (see
\cite{fattorini1999infinite,krastanov2011pontryagin,troltzsch2010optimal}
for additional references).

Surprisingly, theory of optimal control on infinite-dimensional manifolds
is not developed to the same degree of completeness as in a linear space case. But such optimal control problems can appear naturally from  conservation laws which are due to some internal symmetry of the problem.
We should mention also that even in the case of finite-dimensional manifolds
known optimal control techniques lack some tools which demonstrated their effectiveness
in finite-dimensional linear spaces.

One of such tools is nonsmooth analysis \cite{clarke1983,clarkeand1998,clarke2013} which have been introduced originally
for study of optimal control problems as nonstandard nonsmooth problems of calculus of variations. Even if the original focus was on non-differentiable (nonsmooth) problems  it was demonstrated later that nonsmooth analysis
methods can be successfully used for study of problems with smooth data.

This paper contains a mathematical framework for study of smooth
optimal control problems on infinite-dimensional manifolds by using
nonsmooth analysis techniques, in particular a
new Subbotin-like formula relating weak Dini derivatives and limit subgradients
of Lipschitz function.  We demonstrate how these tools
can be used for obtaining sufficient conditions for metric regularity
of constraints, existence of exact nonsmooth penalty function
for optimal control problems on manifolds. Such exact penalty
functions are exploited for derivation of normal optimality
conditions in the form of  Pontryagin maximum principle.

\subsection{Main Definitions and Assumptions}

We provide here the precise definitions and assumptions used for problem ($\mathcal{P}$). The set $\U$ is assumed to be a complete, separable metric space. We say that $u : \left[0,T\right] \rightarrow \U$ is \emph{measurable} if it is the pointwise a.e. limit of continuous functions and we suppose that $u : \left[0,T\right] \rightarrow \U$ is measurable. A \emph{control system} is a mapping $f : \left[0,T\right] \times M \times \U \rightarrow TM$ which satisfies $\pi \circ f(t,q,u) = q$ for all $(t,q,u)$, where the map $\pi : TM \rightarrow M$ is the projection map sending $v \in T_qM$ to its base-point $q$.

Function $\ell : M \times M \rightarrow \R$ is assumed locally Lipschitz and $S \subset M \times M$ closed. A mapping $x : \left[0,T\right] \rightarrow E$ is \emph{absolutely continuous} if there is a Bochner integrable function $v$ such that
\begin{equation*}
  x(t_2) - x(t_1) = \int_{t_1}^{t_2} v(s) \,ds
\end{equation*}
for all $t_1, t_2 \in \left[0,T\right]$. Mapping $q : \left[0,T\right] \rightarrow M$ is absolutely continuous if it is absolutely continuous in local coordinates and we use the notation $q(t;q_0, u)$ for the absolutely solution to \eqref{eq:dynamic-constraint} satisfying $q(0;q_0, u) = q_0$. For a detailed introduction to Bochner integral we suggest \cite{diestel-uhl1977}.

In order to state precise assumptions for control system $f$ we recall that if $(\cO, \varphi)$ is a coordinate chart, then $\varphi_*f : \left[0,T\right] \times \varphi(\cO) \times \U \rightarrow E$ is the \emph{local coordinate expression} for $f$, defined by $(\varphi_* f)(t,x,u):=\varphi_*(\varphi^{-1}(x))f(t, \varphi^{-1}(x),u)$.

\textbf{Assumption ($D$):} Control system $f$ \emph{satisfies Assumption ($D$)} along a continuous map $q_* : \left[0,T\right] \rightarrow M$  if there are finitely many coordinate charts $(\cO_i, \varphi_i)$ and functions $m, k \in L^1$ such that when $q_*(t) \in \cO_i$ we have for almost all $t$, for all $u \in \U$,
\begin{equation}
\label{eq:integrable-bdd}
\begin{aligned}
\sup_{x \in \varphi_i(\cO_i)}  \left\|(\varphi_{i \, *}f)(t, x, u) \right\|_E &\le m(t) \\ \sup_{x \in \varphi_i(\cO_i)}  \left\|(\varphi_{i \, *}f)_x(t, x, u) \right\|_{L(E,E)} &\le m(t)
\end{aligned}
\end{equation}
and
\begin{equation}
\label{eq:integrable-lip}
\begin{aligned}
\sup_{x,y \in \varphi_i(\cO_i)}  \left\|(\varphi_{i \, *}f)(t, x, u) - (\varphi_{i \, *}f)(t,y,u) \right\|_E \le k(t) \left\|x - y\right\|_E \\
\sup_{x,y \in \varphi_i(\cO_i)}  \left\|(\varphi_{i \, *}f)_x(t, x, u) - (\varphi_{i \, *}f)_x(t,y,u) \right\|_{L(E,E)} \le k(t) \left\|x - y\right\|_E
\end{aligned}
\end{equation}
We assume throughout the paper that $f$ satisfies this assumption.

\textbf{Banach Space Assumption:} The space $E$ over which $M$ is modelled is assumed to be a reflexive space. If $M \ne E$ then we also make the assumption that $E$ admits a $C^2$-smooth bump function with locally Lipschitz second derivative. Note that any Hilbert space meets these criteria, as do the Sobolev spaces $W^{k,p}$ ($3 \le p < \infty$) \cite{adams1975,bonic1966smooth}, which are of interest in the study of partial differential equations.

\begin{remark}
  The smoothness assumption is required only for our use of Lagrangian charts. When $M = E$ there is no need for such charts and our results then hold under the assumption that $E$ is reflexive.
\end{remark}

\subsection{Nonsmooth Analysis}

Even when all of the data for Problem ($\cP$) are smooth, techniques of nonsmooth analysis prove themselves to be a useful for the study of control problems. In this section we define the main objects of nonsmooth analysis needed for this paper. Following \cite{ledyaevzhu2007}, we make the following definitions for lower semicontinuous function $\psi : M \rightarrow \R \cup \left\{+\infty\right\}$.
\begin{definition}
  If there exists a $C^1$-smooth function $g : M \rightarrow \R$ such that $\psi - g$ attains a local minimum at $q$ and $\zeta = dg_q$, we say that $\zeta$ is a \emph{Fr\'echet subgradient} for $\psi$. We write $\pF \psi (q) \subset T_q^*M$ for the (possibly empty) set of such covectors.
\end{definition}

\begin{definition}
  If there exist sequences $q_k \rightarrow q$ such that $\psi(q_k) \rightarrow \psi(q)$ and $\zeta_k \in \pF \psi(q_k)$ such that $\zeta_k \weakly \zeta$ then we say that $\zeta$ is a \emph{limiting subgradient} for $\psi$. We write $\pL \psi(q) \subset T_q^*M$ for the set of all such covectors.
\end{definition}

\begin{definition}
If $C \subset M$ is a closed set, we define $N_C^F(q)$ (resp. $N_C^L(q)$) to be $\pF I_C(q)$ (resp. $\pL I_C(q)$), where $I_C : M \rightarrow \R \cup \left\{+\infty\right\}$ is the lower semicontinuous function defined to be zero on $C$ and $+\infty$ elsewhere.
\end{definition}

Subdifferentials are coordinate-free in the sense that if $\varphi : \cO \rightarrow E$ is a coordinate chart then
\begin{equation}
\label{eq:invariance}
  \varphi^* \pL \left(\ell \circ \varphi^{-1}\right)(x) = \pL \ell(q),
\end{equation}
where $\varphi(q) = x$.

Normals to sets are of particular importance to us and the following property will be of use in proving nondegeneracy of the adjoint arc:
\begin{definition}
  \label{defn:snc}
  A closed set $C \subset E$ is \emph{sequentially normally compact} if for any sequences $c_k \in C$ and $\zeta_k \in N_C^F(c_k)$ with $c_k \rightarrow c$ and $\zeta_k \weakly \zeta$ we have $\left\|\zeta\right\|_{E^*} = 0$ if and only if $\left\|\zeta_k \right\|_E \rightarrow 0$.
\end{definition}
This property is well-understood for a large class of sets and \cite{mordu2006} is a good starting point for further study. It is worth noting if $C$ is defined through $C^1$-smooth constraints
\begin{equation*}
\begin{aligned}
  g_i(x) & \le 0 & 1 & \le i \le r  \\
  h_j(x) & = 0 & 1 & \le j \le s
\end{aligned}
\end{equation*}
then $C$ is sequentially normally compact at $c$ if
\begin{equation*}
  \sum_{i = 1}^r \alpha_i \nabla g_i(c) + \sum_{j = 1}^s \beta_j \nabla h_j(c) \ne 0
\end{equation*}
whenever $\alpha_i \ge 0$ and $\sum_{i = 1}^r \alpha_i + \sum_{j = 1}^s \left|\beta_j\right| = 1$. On the other hand, a singleton set $\left\{c\right\}$ is sequentially normally compact if and only if $E$ has finite dimension.

When $E$ is reflexive and $C$ is sequentially normally compact, subdifferentials of the distance function have the following properties \cite{mordu2006}:
\begin{proposition}
  If $x \not \in C$ and $\zeta \in \pL d_C(x)$, then $\left\|\zeta\right\|_{E^*} = 1$ and there is a point $c \in C$ satisfying $\left\|x - c\right\|_E = d_C(x)$ and $\zeta\in N_C^L(c)$. On the other hand, if $x \in C$, then we have $\pL d_C(x) \subset N_C^L(x)$.
\end{proposition}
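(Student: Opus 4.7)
The plan is to split into the cases $x \in C$ and $x \notin C$, exploiting the elementary relation $d_C \le I_C$ for the former and a fuzzy-calculus / approximate projection argument for the latter.

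\textbf{Case $x \in C$.} If $\zeta \in \pF d_C(x)$ is witnessed by a $C^1$ function $g$ with $d_C - g$ attaining a local minimum at $x$ and $dg_x = \zeta$, then on a neighborhood of $x$ we have $I_C - g \ge d_C - g \ge d_C(x) - g(x) = I_C(x) - g(x)$, so $I_C - g$ also has a local minimum at $x$ and $\zeta \in N_C^F(x)$. For $\zeta \in \pL d_C(x)$, unpack the definition to get sequences $x_k \to x$, $\zeta_k \in \pF d_C(x_k)$ with $\zeta_k \weakly \zeta$. If $x_k \in C$ the observation above gives $\zeta_k \in N_C^F(x_k)$ directly; if $x_k \notin C$, the exterior analysis below supplies base points $\tilde c_k \in C$ with $\|x_k - \tilde c_k\|_E \to 0$ and Fr\'echet normals $\tilde\zeta_k \in N_C^F(\tilde c_k)$ converging weakly to $\zeta$. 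Either way, $\zeta \in N_C^L(x)$.

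\textbf{Case $x \notin C$.} Take sequences $x_k \to x$, $\zeta_k \in \pF d_C(x_k)$, $\zeta_k \weakly \zeta$. Since $d_C$ is $1$-Lipschitz we have $\|\zeta_k\|_{E^*} \le 1$, and since $d_C$ is continuous with $d_C(x) > 0$ we have $x_k \notin C$ for all large $k$. The workhorse is the approximate projection formula for the Fr\'echet subdifferential of the distance function at exterior points in Asplund spaces (see \cite{mordu2006}, proved via Ekeland's variational principle and the fuzzy sum rule): for a chosen sequence $\varepsilon_k \downarrow 0$ we produce $c_k \in C$ with $\|x_k - c_k\|_E \le d_C(x_k) + \varepsilon_k$ and $\tilde\zeta_k \in N_C^F(c_k)$ with $\|\tilde\zeta_k\|_{E^*} = 1$, $\|\tilde\zeta_k - \zeta_k\|_{E^*} \le \varepsilon_k$, and $\langle \tilde\zeta_k, x_k - c_k\rangle \ge \|x_k - c_k\|_E - \varepsilon_k$. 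Hence $\tilde\zeta_k \weakly \zeta$, and the unit vectors $e_k := (x_k - c_k)/\|x_k - c_k\|_E$ satisfy $\langle \tilde\zeta_k, e_k \rangle \to 1$.

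\textbf{Passing to the limit.} The sequence $\{c_k\}$ is bounded, so by reflexivity of $E$ a subsequence satisfies $c_k \weakly c$. The critical step, which I expect to be the main obstacle, is promoting this weak convergence to strong convergence so that norm-closedness of $C$ gives $c \in C$. This is the precise purpose of the sequential normal compactness hypothesis: combined with the non-triviality $\|\tilde\zeta_k\|_{E^*} = 1 \not\to 0$ and the duality pairing $\langle \tilde\zeta_k, e_k\rangle \to 1$ with $\|\tilde\zeta_k\|_{E^*} = \|e_k\|_E = 1$, an SNC-based extremal-principle argument (as in \cite{mordu2006}) rules out loss of mass in the weak limit and delivers strong convergence $c_k \to c \in C$. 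Once this is in place, continuity gives $\|x - c\|_E = d_C(x)$, the limiting normal cone definition applied to $\tilde\zeta_k \in N_C^F(c_k)$ with $\tilde\zeta_k \weakly \zeta$ yields $\zeta \in N_C^L(c)$, and the identity $\langle \zeta, x - c\rangle = d_C(x) = \|x - c\|_E$ together with $\|\zeta\|_{E^*} \le 1$ forces $\|\zeta\|_{E^*} = 1$.
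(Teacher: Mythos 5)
The paper offers no proof of this proposition --- it is quoted from \cite{mordu2006} --- so there is nothing internal to compare against; your attempt must stand on its own. Your architecture is the standard one: the case $x\in C$ via $d_C\le I_C$ together with approximate projections for the exterior subsequence (that half is essentially complete, since there $\|x_k-c_k\|\le d_C(x_k)+\ve_k\to 0$ forces $c_k\to x$ strongly), and the case $x\notin C$ via the fuzzy approximate-projection formula in Asplund spaces followed by a weak-compactness limit passage. The problem is precisely at the step you yourself flag as the main obstacle, and the fix you propose does not work. Sequential normal compactness, as given in Definition \ref{defn:snc}, quantifies over sequences $c_k\in C$ that \emph{already converge strongly} to some $c\in C$ and then constrains the normals $\zeta_k$; it says nothing about the base points, so it cannot be used to upgrade $c_k\weakly c$ to $c_k\to c$, nor to show that the weak limit lies in $C$. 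Invoking an ``SNC-based extremal-principle argument'' here is circular: you need strong convergence of the $c_k$ before SNC has anything to say.

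The gap is substantive, not presentational: what is actually required is well-posedness of the best approximation problem at $x$ (every almost-minimizing sequence of projections converges strongly), which in \cite{mordu2006} is a separate hypothesis and is not a consequence of reflexivity plus SNC. Indeed, take $E=\ell^2$ and $C=\bigcup_{n\ge 1}\ol{B}\bigl((2+\tfrac1n)e_n,\tfrac12\bigr)$: the balls are uniformly separated, so $C$ is closed and is SNC at every point (locally it is a single closed ball, a convex body). Here $d_C(0)=3/2$ is not attained, and along $x_k=(2/n_k)e_{n_k}\to 0$ the ball with index $n_k$ is the strict and stable minimizer, so $d_C$ is smooth near $x_k$ with gradient $-e_{n_k}\in\pF d_C(x_k)$ and $-e_{n_k}\weakly 0$; hence $0\in\pL d_C(0)$ even though no projection point exists and $\|0\|_{E^*}\ne 1$. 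So no amount of manipulation of reflexivity and SNC alone will close your limit passage; you must either import a well-posedness or weak-closedness assumption on $C$ (e.g.\ convexity, or weak closedness together with a Kadec-type property of the norm) or confine the exterior statement to the setting in which \cite{mordu2006} actually proves it.
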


\section{Statement of Main Results}

We write $\cU$ for the set of measurable controls and define the Pontryagin function $H : \left[0,T\right] \times T^*M \times \U \rightarrow \R$ by
\begin{equation*}
  H(t,\zeta, u) := \left<\zeta, f(t, q, u)\right>,
\end{equation*}
for $\zeta \in T_q^*M$. The vector field $\ora{H}(t, \zeta,u)$ is the \emph{Hamiltonian lift} of $H$, which is defined through the natural symplectic structure on $T^*M$. We suggest \cite{lang1999} for an introduction in the infinite-dimensional case. 
With these constructs in mind, we state the main results of this paper.

\begin{theorem}
\label{thm:pmp}
  Suppose that a pair $(q_0, u_*) \in M \times \cU$ is optimal for Problem ($\cP$) with trajectory $q_*(t):=q(t;q_0, u_*)$. There exists $\lambda_0 \in \left\{0,1\right\}$ and an absolutely continuous map $\zeta : \left[0,T\right] \rightarrow T^*M$ determined by
  \begin{equation}
    \label{eq:adjoints}
    \dot{\zeta}(t) = \ora{H}(t, \zeta(t), u_*(t))
  \end{equation}
  and boundary conditions $\left(\zeta(0), - \zeta(T)\right) \in  \lambda_0 \pL \ell(q_*(0), q_*(T)) + N_S^L(q_*(0), q_*(T))$ which satisfies the maximum principle:
  \begin{equation}
    \label{eq:max-princ}
H(t, \zeta(t), u_*(t)) = \max_{u \in \U} H(t, \zeta(t), u) \quad a.a. \; t.
  \end{equation}
  Further, $\left(\lambda_0, \zeta(t) \right) \ne 0$ for all $t$.
\end{theorem}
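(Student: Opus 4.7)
The plan is to combine the three ingredients announced in the introduction: an exact nonsmooth penalty removing the endpoint inclusion $(q(0),q(T))\in S$; Lagrangian charts along $q_*$ that trivialize the symplectic structure of $T^*M$ in a tube about the optimal graph; and Ekeland's variational principle applied to packets of needle variations of $u_*$. Throughout, I regard Problem ($\cP$) as a minimization over $(q_0,u)\in M\times\cU$, with terminal state $q(T;q_0,u)$ given by the flow of \eqref{eq:dynamic-constraint}; Assumption ($D$) makes this map well-defined and Fr\'echet-differentiable in each of the charts $(\cO_i,\varphi_i)$, with integrably bounded linearization.

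First, using the announced Subbotin-type formula and reflexivity of $E$, I would prove a metric-regularity estimate on the endpoint map and then extract an exact penalty constant $K>0$, so that $(q_0,u_*)$ is an unconstrained local minimizer of $J(q_0,u)=\ell(q(0;q_0,u),q(T;q_0,u))+K\,d_S(q(0;q_0,u),q(T;q_0,u))$. Because limiting subgradients of $d_S$ are controlled by $N_S^L$ via the proposition recalled just before Section~2, this step reduces the original constrained problem to a nonsmooth, unconstrained one and turns the closed constraint $S$ into the source of the term $N_S^L$ in the transversality condition.

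Next I would cover $q_*([0,T])$ by finitely many of the charts from Assumption~($D$) and upgrade them to Lagrangian charts on $T^*M$; this is where the $C^2$-bump hypothesis on $E$ is used. In such charts $\ora{H}$ takes canonical Hamiltonian form, so adjoint arcs solve linear equations $\dot\zeta=-[(\varphi_{i\,*}f)_x(t,x(t),u_*(t))]^*\zeta$ with the bounds of \eqref{eq:integrable-bdd}, and any terminal covector transports backward to an absolutely continuous section $\zeta(t)\in T^*_{q_*(t)}M$. For each Lebesgue point $\tau$ of $u_*$ and each $u\in\U$ I would then combine a classical needle variation at $(\tau,u)$ with a variation of $q_0$, feed the result into $J$, apply Ekeland's principle, and take Fr\'echet subgradients of $J$ at the perturbed minima. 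This yields normalized covectors $(\lambda_0^\ve,\zeta_\ve)$ satisfying an approximate transversality in $\lambda_0^\ve\pF\ell+N_S^F$ and an approximate maximum inequality at $\tau$. Passing to the weak limit in $E^*$---permitted by reflexivity---produces the adjoint equation \eqref{eq:adjoints}, the exact transversality condition $(\zeta(0),-\zeta(T))\in\lambda_0\pL\ell(q_*(0),q_*(T))+N_S^L(q_*(0),q_*(T))$, and \eqref{eq:max-princ} for a countable dense set of $u$'s, which extends to all of $\U$ by separability of $\U$ and a measurable-selection argument.

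Nondegeneracy $(\lambda_0,\zeta(t))\ne0$ follows from sequential normal compactness (Definition~\ref{defn:snc}) of $S$ at $(q_*(0),q_*(T))$: if the weak limit of the normalized multipliers were zero, SNC would force the approximating Fr\'echet subgradients themselves to have norm tending to zero, contradicting the normalization $\|(\lambda_0^\ve,\zeta_\ve(T))\|=1$; linearity of \eqref{eq:adjoints} then propagates non-vanishing across $[0,T]$. The step I expect to be hardest is the transition from the approximate inequality to the exact \eqref{eq:max-princ}: in infinite dimensions the standard packet-of-needles argument depends on weak compactness of reachable cones and a careful measurable selection, and only the joint use of the integrable domination of Assumption~($D$), separability of $\U$, and reflexivity of $E$ makes the Hamiltonian inequality survive the limit pointwise in $\tau$ for every $u\in\U$.
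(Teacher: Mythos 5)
Your outline contains one structural gap and one step that, as you yourself flag, is not actually carried out. The gap concerns where $\lambda_0=0$ comes from. You propose to \emph{prove} a metric-regularity estimate for the endpoint map and then extract an exact penalty unconditionally; but metric regularity of the constraint $(q(0),q(T))\in S$ can fail, and its failure is precisely the abnormal case. The paper resolves this with a dichotomy: either the optimal pair is \emph{weakly controllable in the direction of $S$} (Definition \ref{defn:weakly-controllable}), in which case Ekeland's principle on the pseudometric space $(\cX,\rho)$ yields metric regularity, exact penalization by $\ell+K\Phi$, and the theorem with $\lambda_0=1$; or it is not, in which case the failure of the decrease condition \eqref{eq:strong-decrease} along a sequence $(q_n,u_n)$ directly produces, via sequential normal compactness and reflexivity, a nonzero $(\xi,\eta)\in N_S^L$ and the theorem with $\lambda_0=0$. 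Your plan conflates these: having already penalized exactly (which presupposes normality), the subsequent normalization of $(\lambda_0^\ve,\zeta_\ve)$ and the appeal to SNC to rule out a vanishing limit are doing work that the dichotomy should be doing, and no mechanism in your argument actually delivers the case $\lambda_0=0$.

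The second issue is the variational engine. You use packets of needle variations at Lebesgue points, Ekeland at each stage, and a limit over a countable dense subset of $\U$; you correctly identify the passage from the approximate inequality to the exact pointwise maximum as the hardest step, but you do not supply it, and in infinite dimensions it is genuinely delicate. The paper's route is built to avoid exactly this: it uses relaxed (sliding-mode) variations $q^\lambda(t)=q(t;q_0^\lambda,(1-\lambda)\delta_u+\lambda\nu)$ over all of $[0,T]$, collects the resulting endpoint derivatives into the bounded convex set $V(q_0,u)$, and applies a Subbotin-type result (Proposition \ref{prop:lower-bound}, proved with the multidirectional mean value inequality and weak sequential compactness from reflexivity) to produce a \emph{single} limiting subgradient $(\xi,\eta)\in\pL\ell$ with $\langle\xi,v_0\rangle+\langle\eta,v_1\rangle\ge 0$ on all of $V$. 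Taking $\nu=\delta_{u_*}$ gives the transversality $\xi=-P_{0,T}^*\eta$, taking $v_0=0$ gives the \emph{integrated} maximum condition \eqref{eq:int-pmp}, and Proposition \ref{prop:int-pmp} converts that into the pointwise condition \eqref{eq:max-princ} almost everywhere --- no Lebesgue-point needle analysis, no countable density argument over $\U$, and no per-point weak limits are needed. If you want to keep your needle-based architecture you must actually prove the weak compactness of the reachable cones and the measurable selection you invoke; otherwise the relaxed-control route is the one that closes.
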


We also introduce a notion of \emph{weak controllability in the direction of $S$} (Definition \ref{defn:weakly-controllable} of Section \ref{section:metric-regularity}) and we prove the following exact penalization type result:
\begin{theorem}
\label{thm:penalization}
  If the pair $(q_0, u_*)$ optimal for Problem ($\cP$) and is weakly controllable in the direction of $S$ then there exists $\kappa > 0$ and such that $(q_0, u_*)$ is optimal for Problem ($\cP_{\mathrm{penalty}}$) whose cost is given by
  \begin{equation*}
    J(q,u) := \ell(q,q(t;q,u)) + \kappa \Phi(q,q(t;q,u))
  \end{equation*}
  where $\Phi : M \times M \rightarrow \R$ is a locally defined Lipschitz function satisfying $\pL \Phi(q, q^\prime) \subset N_S^L(q,q^\prime)$ when $(q, q^\prime) \in S$.
\end{theorem}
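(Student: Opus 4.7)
The plan is to prove Theorem \ref{thm:penalization} as a standard exact penalty result: weak controllability in the direction of $S$ should yield a linear (metric regularity) estimate converting the distance from the endpoint pair $(q,q(T;q,u))$ to $S$ into the distance from $(q,u)$ to the set of feasible admissible pairs, and this estimate combined with a suitable locally Lipschitz function $\Phi$ upgrades the mere inclusion $\pL\Phi\subset N_S^L$ into an honest penalization.

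For $\Phi$ itself I would take the pull-back of the norm-distance to $S$ in a product chart around $(q_*(0), q_*(T))$ on $M\times M$. Writing $\tilde S$ for the image of $S$ under chart maps $\varphi,\varphi'$ on the two factors, set $\Phi(q,q'):=d_{\tilde S}\!\bigl(\varphi(q),\varphi'(q')\bigr)$ on the neighborhood. This function is $1$-Lipschitz in the chart norm and hence locally Lipschitz on $M\times M$. The inclusion $\pL\Phi(q,q')\subset N_S^L(q,q')$ for $(q,q')\in S$ follows by combining the proposition relating $\pL d_{\tilde S}$ to $N_{\tilde S}^L$ with the coordinate-invariance identity \eqref{eq:invariance}.

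The crux is to show that weak controllability yields a constant $c>0$ and a neighborhood of $(q_0,u_*)$ in which
\begin{equation*}
  \mathrm{dist}\!\bigl((q,u),\mathcal{F}\bigr)\ \le\ c\cdot d_S\!\bigl(q,q(T;q,u)\bigr),
\end{equation*}
where $\mathcal{F}$ is the set of feasible admissible pairs and the ambient distance on $M\times\cU$ is the natural one coming from the chart on $M$ together with an $L^1$-type metric on $\cU$. Granted this, if the theorem failed then for arbitrarily large $\kappa$ one could find $(q_\kappa,u_\kappa)$ with $J(q_\kappa,u_\kappa)<\ell(q_*(0),q_*(T))$. An Ekeland-style argument produces an approximate minimizer of $J$ on the complete metric space of admissible pairs; projecting this minimizer onto $\mathcal{F}$ via the regularity estimate and using the local Lipschitz constant $L$ of $\ell$ together with the Lipschitz dependence of the endpoint map on the control (for which Assumption ($D$) supplies Gronwall-type bounds) one obtains a feasible pair whose cost is strictly smaller than $\ell(q_*(0),q_*(T))$ whenever $\kappa$ exceeds an explicit constant of the form $cL'$. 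Any such $\kappa$ witnesses the conclusion.

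The delicate step is the regularity estimate itself. Since $\U$ is not linear and the endpoint map $(q,u)\mapsto q(T;q,u)$ is not Fr\'echet-differentiable in $u$ in any useful sense, the usual open-mapping route is unavailable. I expect weak controllability to be defined precisely so that a family of needle-type variations of $u_*$ covers, linearly and uniformly, a wedge of endpoint directions transversal to $S$ near $(q_*(0),q_*(T))$; reflexivity of $E$ is then used to extract weak limits when gluing finitely many such variations together. This is where the bulk of the technical work lies and where the infinite-dimensional manifold structure genuinely enters.
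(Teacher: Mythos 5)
Your overall architecture -- take $\Phi$ to be the chart pull-back of the distance to the image of $S$ (so that $\pL\Phi\subset N_S^L$ on $S$ follows from the distance-function proposition plus coordinate invariance \eqref{eq:invariance}), establish a metric-regularity estimate of the form $\rho_{\cA}(q,u)\le\kappa\,\Phi(q,q(T;q,u))$ near $(q_0,u_*)$, and then run a standard exact-penalization argument -- is exactly the paper's architecture, and those outer layers of your sketch are fine. The genuine gap is that you leave the metric-regularity estimate unproven and, where you do guess at its mechanism, you guess the wrong one. The paper's notion of weak controllability (Definition \ref{defn:weakly-controllable}) is not a covering condition by needle variations at the nominal pair: it is a \emph{uniform infinitesimal decrease} condition, namely that for every $(q_1,u)$ in a $\rho$-ball around $(q_0,u_*)$ there is a direction $(v_0,v_1)\in V(q_1,u)$ with $D^w_{c\ell\,V(q_1,u)}\Phi(q_1,q(T;q_1,u);v_0,v_1)<-\Delta$. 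The regularity estimate is then obtained by a decrease-principle argument: assume \eqref{eq:metric-regularity} fails at some $(\ol q,\ol u)$, apply Ekeland's principle (Proposition \ref{prop:ekeland}) to $\Phi$ on the complete \emph{pseudometric} space $(\cX,\rho)$ to get a minimizer of $\Phi+\tfrac{2}{\kappa}\rho(\cdot)$ that is still in the $\ve_0$-ball, and then contradict minimality by realizing the decrease direction with actual variations via Lemmas \ref{lem:variations-dwv} and \ref{lem:pseudo-approx}, using $\kappa=4c_1/\Delta$ so that the penalty term $\tfrac{2}{\kappa}c_1\lambda$ cannot offset the decrease $-\Delta\lambda$.

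Your proposed route -- finitely many needle variations covering, linearly and uniformly, a wedge of endpoint directions transversal to $S$, with weak limits extracted by reflexivity -- would not deliver this. First, in infinite dimensions finitely many variations cannot cover an open wedge of directions, and no open-mapping or Lyusternik-type surjectivity is available (as you yourself observe); the whole point of passing through the Dini-type derivative $D^w_V\Phi$ and Ekeland is to avoid any surjectivity requirement. Second, the decrease condition must hold \emph{uniformly over a $\rho$-neighborhood} of $(q_0,u_*)$, not merely at $(q_0,u_*)$: the Ekeland point at which you need to produce a descent direction is some nearby, generally infeasible pair $(q,u)$, and a condition stated only at the nominal trajectory gives you nothing there. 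Your sketch of the penalization step itself (Ekeland on $J$, projection onto the feasible set $\cA$, Lipschitz constant of $\ell$ with respect to $\rho$ via Gronwall) is workable once \eqref{eq:metric-regularity} is in hand, but as written the proposal is missing the one step that carries the mathematical content of the theorem.
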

In Theorem \ref{thm:penalization} the constraint \eqref{eq:state-constraint} has been removed through an exact penalization technique.

The proof of Theorem \ref{thm:penalization} relies on a metric regularity argument and we next introduce the essential tools used for this argument. Following this, we provide a proof of Theorem \ref{thm:penalization} and conclude the paper with a proof of Theorem \ref{thm:pmp}. A central technique throughout this paper is study of global variations of trajectories through Lagrangian charts and we develop these techniques next.

\section{Global Variation of Trajectories and Pseudometric Space}

We wish to study variations of a trajectory $q(t;q_0, u)$ in which both the initial condition $q_0$ and the control are varied. It is useful to employ sliding mode type variations, which are based on relaxed controls,
a technique which can be traced back to the work of L.C. Young \cite{young1937} in the Calculus of Variations and Gamkrelidze and Warga \cite{gamk1978,warga1962,warga1972} in optimal control.

Given $u_0 \in \U$, define a measure $\delta_{u_0}$ on $\U$ by setting
\begin{equation*}
  \int_{\U} h(u) d\delta_{u_0} := h(u_0)
\end{equation*}
for an arbitrary function $h : \U \rightarrow E$.

\begin{definition}
We say that $\nu$ is a \emph{relaxed control} if for some integer $n$ there are $n$ controls $u_i \in \cU$ and convex coefficients $\lambda_1, \dots, \lambda_n$ such that $\nu(t) = \sum_{i = 1}^n \lambda_i \delta_{u_i(t)}$. We write $\cM$ for the set of relaxed controls.
\end{definition}

Control system $f$ is extended to handle relaxed controls by setting
\begin{equation*}
  \what{f}(t, q, \nu(t)) : = \int_{\U} f(t, q, u) \, d\nu(t) = \sum_{i = 1}^n \lambda_i f(t, q, u_i(t)).
\end{equation*}

For problems with constraint \eqref{eq:state-constraint} an optimal control $u_*$ may not be optimal among relaxed controls, a phenomenon known as \emph{relaxation gap}. Nonetheless, relaxed trajectories can be approximated using usual controls and so are useful for global variations of trajectories. In order to give quantitative estimates related to such variations, employ the technique of Lagrangian charts, introduced in \cite{kl2014}.

\subsection{Lagrangian Charts}

Fix an absolutely continuous map $q_* : \left[0,T\right] \rightarrow M$. We would like to study continuous curves which are close to $q_*$. By the smoothness assumption on $E$, we can construct a $C^2$-smooth, nonautomous vector field $V_t$ with locally Lipschitz second derivative such that $V_t$ extends $\dot{q}_*$ in the sense that for almost all $t$ we have $V_t(q_*(t)) = \dot{q}_*(t)$. Let $P_{s,t}$ denote the flow of $V_t$. Choose a coordinate chart $(\cO, \varphi)$ with $q_*(0) \in \cO$ and let $\psi_t : P_{0,t}(\cO) \rightarrow \varphi(\cO)$ be the map $\psi_t := \varphi \circ P_{t, 0}$. We refer to $\left(P_{0,t}(\cO), \psi_t\right)_{t \in \left[0,T\right]}$ as a \emph{Lagrangian chart}, in analogy with the concept of Lagrangian coordinates from fluid dynamics.

Given a pair $(q,u)$ such that $q(t;q,u) \in P_{0,t}(\cO)$ for all $t$ the curve $x(t):=\psi_t(q(t;q,u))$ satisfies
\begin{equation}
\label{eq:dotx}
\dot{x}(t) := \left(\psi_{t \, *} f\right)(t, x(t), u(t))  - \left(\psi_{t \, *} V_t\right)(x(t)) \quad a.a. \, t.
\end{equation}
A proof of \eqref{eq:dotx} in the context of Banach manifolds can be found in \cite{kl2014extension}. With \eqref{eq:dotx} in mind we introduce a control system $g : \left[0,T\right] \times \varphi(\cO) \times \U \rightarrow E$ defined through
\begin{equation}
\label{eq:defn-g}
  g(t,x,u) := \left(\psi_{t \, *}f \right)(t,x,u) - \left(\psi_{t \, *} V_t\right)(x).
\end{equation}
Under our assumptions on $f$ and $V_t$, control system $g$ satisfies Assumption ($D$) along constant trajectory $\varphi(q_*(0))$. We write $m_g$ and $k_g$ for the functions in \eqref{eq:integrable-bdd} and \eqref{eq:integrable-lip}, in which we may take $\varphi_i := Id_E$.
Given $x_0 \in \varphi(\cO)$ and $u \in \U$ we will write $x(t;x_0,u)$ for the solution to
\begin{equation*}
\dot{x}(t;x_0,u) = g(t, x(t;x_0,u), u(t))
\end{equation*}
which satisfies $x(0;x_0, u) = x_0$.

\subsection{Pseudometric Space and Approximation} \label{pseudometric-subsection}

The Lagrangian chart $\left(P_{0,t}(\cO), \psi_t\right)_{t \in \left[0,T\right]}$ provides a way to quantify differences between trajectories which are close to $q_*$. We next introduce a pseudometric structure on a set of controls generating such trajectories.

Introduce a neighborhood $\cO_0$ of $q_*(0)$ for which $\ol{\cO}_0 \subset \cO$ and let $\cX$ denote the set of pairs $(q_0, u)$ such that $q(t;q_0,u) \in P_{0,t}(\ol{\cO}_0)$ for all $t$. Equivalently, $\cX$ can be thought of as the set of all $(x_0, u)$ such that $x(t;x_0, u) \in \varphi(\ol{\cO}_0)$ for all $t$. We write $\cX^\circ$ for the smaller set of pairs $(q_0, u) \in \cX$ for which $q(t;q_0, u) \in P_{0,t}(\cO_0)$ for all $t$.

We introduce the following pseudometric on $\cX$:
\begin{equation*}
\begin{aligned}
  & \rho(q_1,u_1; q_2, u_2) := \left\|x_1 - x_2\right\|_E \\
& \hspace{1cm} + \int_0^T \left\|g(t,x(t;x_1, u_1), u_1(t)) - g(t, x(t;x_2, u_2), u_2(t)) \right\|_E \, dt \\
 & \hspace{2cm} \int_0^T \left\|g_x(t,x(t;x_1, u_1), u_1(t)) - g_x(t, x(t;x_2, u_2), u_2(t)) \right\|_{L(E,E)} \, dt,
\end{aligned}
\end{equation*}
where $x_i := \varphi(q_i)$.

We develop several approximation results for global variations in the space $\cX$. All are based on the following lemma, which in turn mirrors the technique of chattering control found in \cite{berkovitz1974optimal,gamk1978}. A complete proof of Lemma \ref{lem:chattering} is given in \cite{kl2014} for the case $E = \R^n$.
\begin{lemma}
\label{lem:chattering}
Let $F$ be a Banach space and $h_i : \left[0,T\right] \rightarrow F$ Bochner integrable functions, $i = 1, \dots, n$. For any convex coefficients $\lambda_1, \dots, \lambda_n$ and $\ve > 0$ there exist disjoint measurable sets $A_i \subset \left[0,T\right]$ with $m(A_i) = \lambda_i T$ such that
\begin{equation}
\label{eq:chattering-bound}
  \left\|\int_{\left[0,t\right] \cap A_i} h_i(s) \, ds - \lambda_i \int_0^t h_i(s) \, ds \right\|_F < \ve \quad i = 1, \dots, n
\end{equation}
\end{lemma}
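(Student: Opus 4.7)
\emph{Plan of proof.} The strategy is to first reduce to the case of simple functions via the fact that simple functions are dense in the space of Bochner integrable functions in $L^1$-norm, and then to carry out the chattering construction explicitly for simple functions. This mirrors how one proves the finite-dimensional version but using the defining approximation property of the Bochner integral in place of componentwise arguments.

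First I would fix $\varepsilon>0$ and choose, for each $i=1,\dots,n$, a simple function $\wtilde{h}_i:\left[0,T\right]\to F$ (finitely many values attained on finitely many measurable sets) such that
\begin{equation*}
\int_0^T \bigl\|h_i(s)-\wtilde{h}_i(s)\bigr\|_F \, ds < \varepsilon/2.
\end{equation*}
By passing to a common refinement, I may assume there is a single finite measurable partition $\left\{I_1,\dots,I_N\right\}$ of $\left[0,T\right]$ on which every $\wtilde{h}_i$ is constant, with value $v_{i,k}\in F$ on $I_k$.

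Next I would construct the sets $A_i$ locally on each piece of the partition. Subdivide each $I_k$ into a finite family of disjoint measurable subsets $\left\{J_{k,\ell}\right\}_{\ell=1}^{L_k}$ of equal measure $\delta_k$, with $\delta_k$ to be chosen small. Within each $J_{k,\ell}$, split it into $n$ consecutive measurable pieces of measures $\lambda_1 \delta_k,\dots,\lambda_n \delta_k$, assigning the $i$-th piece to a set $A_{i,k,\ell}$. Then $A_i:=\bigsqcup_{k,\ell} A_{i,k,\ell}$ has $m(A_i)=\lambda_i T$ and the $A_i$ are pairwise disjoint. A direct estimate on how $m(A_i\cap[0,t])$ compares with $\lambda_i\, m(I_k\cap[0,t])$ on each $I_k$ gives
\begin{equation*}
\bigl|\, m(A_i\cap[0,t]\cap I_k)-\lambda_i\, m(I_k\cap[0,t]) \,\bigr| \le \delta_k
\end{equation*}
for every $t$. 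Summing over $k$ and multiplying by $\|v_{i,k}\|_F$, I can make
\begin{equation*}
\sup_{t\in[0,T]}\left\|\int_{[0,t]\cap A_i}\wtilde{h}_i(s)\,ds - \lambda_i \int_0^t \wtilde{h}_i(s)\,ds\right\|_F < \varepsilon/2
\end{equation*}
by choosing all $\delta_k$ small enough (this depends only on the finitely many values $v_{i,k}$).

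Finally, I would combine the two approximations. Writing
\begin{equation*}
\int_{[0,t]\cap A_i} h_i - \lambda_i \int_0^t h_i = \int_{[0,t]\cap A_i}(h_i-\wtilde{h}_i) - \lambda_i\int_0^t(h_i-\wtilde{h}_i) + \Bigl(\int_{[0,t]\cap A_i}\wtilde{h}_i - \lambda_i\int_0^t \wtilde{h}_i\Bigr),
\end{equation*}
the first two terms are each bounded in $F$-norm by $\int_0^T\|h_i-\wtilde{h}_i\|_F\,ds<\varepsilon/2$ (using $\lambda_i\le 1$), and the third is bounded by $\varepsilon/2$ by construction, yielding \eqref{eq:chattering-bound} up to a trivial rescaling of $\varepsilon$.

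The main obstacle is purely organizational: the chattering construction on $I_k$ must produce sets whose overlap with every initial segment $[0,t]$ is well controlled, which is why we subdivide into many small pieces before allocating among the $\lambda_i$'s. The passage from scalar or finite-dimensional valued functions to Bochner integrable functions with values in $F$ is handled entirely by the initial $L^1$-approximation by simple functions, so no new functional-analytic ingredient is needed beyond what is standard for the Bochner integral.
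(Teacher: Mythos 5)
Your proof is correct and follows essentially the same strategy as the paper's: reduce to a dense subclass of $L^1$ and then perform the chattering allocation of consecutive pieces of measure proportional to $\lambda_i$ over a fine partition, letting the partition mesh control the error uniformly in $t$. The only (immaterial) difference is that you approximate by simple functions where the paper reduces to continuous functions, which makes your error estimate on each partition element exact rather than relying on uniform continuity.
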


\begin{proof}
We provide a sketch, referring the reader to \cite{kl2014} for additional detail. First check that suffices to assume that function $h_i$ are continuous. Then form a partition $0 := t_0 < t_1 < \dots < t_{r-1} < t_r := T$. For $j = 0, \dots, r$ let $I_j := \left[t_j, t_{j+1}\right]$ and define
\begin{equation*}
  I_{j, k} := \left[t_j + \sum_{i = 1}^{k-1} \lambda_i \, \left(t_{j+1}- t_j\right), t_j + \sum_{i = 1}^{k} \lambda_i \left(t_{j+1}- t_j\right) \right].
\end{equation*}
Let $A_k = \cup_{j = 0}^{r-1} I_{j, k}$. A careful estimate of the left-hand side in \eqref{eq:chattering-bound} shows that it goes to zero with the diameter of the partition.
\end{proof}

A first application of Lemma \ref{lem:chattering} is that relaxed trajectories may be approximated uniformly with usual trajectories.
\begin{lemma}
\label{lem:approx-relax}
Suppose that $q_0 \in M$ and $\nu \in \cM$ are such that $q(t;q_0,\nu) \in P_{0,t}(\cO_0)$ for all $t$ and let $\ve > 0$ be given. There is a control $u \in \cU$ such that $(q_0, u) \in \cX^\circ$ and
\begin{equation}
\label{eq:approx-relax}
  \max_{t \in \left[0,T\right]} \left\| \psi_t(q(t;q_0,\nu)) - \psi_t(q(t;q_0,u)) \right\|_E < \ve.
\end{equation}
\end{lemma}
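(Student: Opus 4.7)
The plan is to work entirely in the Lagrangian chart, where the problem reduces to a chattering argument in the Banach space $E$. Set $x_\nu(t) := \psi_t(q(t;q_0,\nu))$, which by \eqref{eq:dotx} satisfies
\begin{equation*}
  \dot{x}_\nu(t) = \sum_{i=1}^n \lambda_i \, g(t,x_\nu(t),u_i(t)), \qquad x_\nu(0) = \varphi(q_0),
\end{equation*}
and write $h_i(s) := g(s, x_\nu(s), u_i(s))$, which are Bochner integrable by Assumption ($D$) applied to $g$.

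The first main step is to apply Lemma \ref{lem:chattering} to the functions $h_1, \dots, h_n$ with a small parameter $\eta > 0$ to be chosen later, producing disjoint measurable sets $A_i \subset [0,T]$ with $m(A_i) = \lambda_i T$ and uniform control on the $n$ chattering integrals. Define the ordinary control $u(t) := u_i(t)$ when $t \in A_i$ and let $x_u(t) := x(t;\varphi(q_0),u)$ be the corresponding solution, defined a priori only on a maximal subinterval of $[0,T]$ on which it remains in $\varphi(\cO)$.

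The second step is the key estimate. On any interval where $x_u$ is defined, rewrite
\begin{equation*}
\begin{aligned}
 x_u(t) - x_\nu(t) = \sum_{i=1}^n \int_{[0,t]\cap A_i}\!\!\bigl[g(s,x_u(s),u_i(s)) - h_i(s)\bigr]\, ds \\
 + \sum_{i=1}^n \biggl[\int_{[0,t]\cap A_i} h_i(s)\, ds - \lambda_i \int_0^t h_i(s)\, ds \biggr].
\end{aligned}
\end{equation*}
The first sum is bounded in norm by $\int_0^t k_g(s) \|x_u(s)-x_\nu(s)\|_E\, ds$ by the Lipschitz estimate \eqref{eq:integrable-lip} for $g$, while the second is bounded by $n\eta$ by Lemma \ref{lem:chattering}. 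Gronwall's inequality gives $\|x_u(t) - x_\nu(t)\|_E \le n\eta \exp\bigl(\int_0^T k_g(s)\, ds\bigr)$ on the interval of existence.

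The final step handles the uniformity and the constraint $(q_0,u) \in \cX^\circ$. Since $\varphi(\cO_0)$ is open in $E$ and $x_\nu: [0,T] \to \varphi(\cO_0)$ is continuous, the function $t \mapsto d(x_\nu(t), E \setminus \varphi(\cO_0))$ is continuous and strictly positive, hence attains a positive minimum $r > 0$. Choosing $\eta$ so that $n\eta \exp(\int_0^T k_g) < \min(r,\varepsilon)$, a standard continuation argument shows $x_u(t)$ cannot leave $\varphi(\cO_0)$, so it exists on all of $[0,T]$ and satisfies \eqref{eq:approx-relax}. The main obstacle I anticipate is precisely this continuation/escape bookkeeping: one must verify that the a priori bound propagates along the solution, preventing $x_u$ from exiting $\varphi(\cO)$ before reaching time $T$, which is handled by combining the Gronwall bound with the open-neighborhood argument above.
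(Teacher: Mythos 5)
Your proposal is correct and follows essentially the same route as the paper: apply the chattering lemma (Lemma \ref{lem:chattering}) to the functions $s \mapsto g(s, x(s;x_0,\nu), u_i(s))$, split $x_u - x_\nu$ into a Lipschitz term plus the chattering error, and close with Gronwall. Your final continuation step, which verifies that the approximating trajectory stays in $\varphi(\cO_0)$ so that $(q_0,u)\in\cX^\circ$, is a detail the paper's sketch leaves implicit, and it is handled correctly.
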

\begin{proof}
Set $x_0 := \varphi(q_0)$ and check that it suffices to prove that if $\nu \in \cM$ is such that $x(t;x_0,\nu) \in \varphi(\cO_0)$ for all $t$ then there is a control $u \in \cU$ such that
\begin{equation*}
  \max_{t \in \left[0,T\right]} \left\| x(t;x_0,\nu) - x(t;x_0,u) \right\|_E < \ve  .
\end{equation*}
Choose controls $u_i$ and convex coefficients $\lambda_1, \dots, \lambda_n$ such that $\nu(t) := \sum_{i = 1}^n \lambda_i \delta_{u_i(t)}$ and consider the functions
\begin{equation*}
  t \mapsto \what{g}(t,x(t;x_0,\nu), \delta_{u_i(t)}).
\end{equation*}
Given a measurable set $A \subset \left[0,T\right]$ let $\chi_{A}$ be the function $\chi_A(t) = 1$ for $t \in A_i$ and $\chi_A(t)=0$ otherwise. By Lemma \ref{lem:chattering} we can choose disjoint measurable sets $A_i \subset \left[0,T\right]$ with $m(A_i) = \lambda_i T$ such that control $w(t) = \sum_{i = 1}^n\chi_{A_i}(t) u_i(t)$
satisfies
\begin{equation*}
  \max_{t \in \left[0,T\right]} \left\|  \int_0^t \what{g}(s,x(s;x_0,\nu),\nu(s)) \, ds -  \int_0^t \what{g}(s,x(s;x_0,\nu), \delta_{w(s)}) \, ds \right\|_E < \ve.
\end{equation*}
For $t \in \left[0,T\right]$ we now have
\begin{equation*}
  \begin{aligned}
    & \left\|x(t;x_0, \nu) - x(t;x_0, w) \right\|_E \le \ve T+ \int_0^t \left\|g(s,x(s;x_0,\nu), w(s)) - g(s,x(s;x_0,w), w(s)) \right\|_E \, ds \\
  &  \hspace{1cm} \le \ve T+ \int_0^t k_g(s)  \left\|x(s;x_0, \nu) - x(s;x_0, w) \right\|_E  \, ds.
  \end{aligned}
\end{equation*}
An application of the Gronwall lemma completes the proof.
\end{proof}

\subsection{Global Variation of Trajectories}

We now turn to our study of global variations.
Let $(q_0, u) \in \cX$ be given. We consider a global variation of $q(t;q_0, u)$ corresponding to relaxed control $\nu$ and tangent vector $v_0 \in T_{q_0}M$ satisfying $\left\|\varphi_*(q_0)v_0\right\|_E \le 1$. For $q \in \cO$, we let $\cB_q\subset T_qM$ denote the set of such tangent vectors $v_0$.

For relaxed control $\nu$ and tangent vector $v_0 \in \cB_{q_0}$ we define
\begin{equation}
\label{eq:initial-variation}
  q^\lambda_0 := \varphi^{-1}\left(\varphi(q_0) + \lambda \varphi_*(q_0) v_0\right)
\end{equation}
and we consider a global variation of $q(t;q_0, u)$ defined by
\begin{equation}
\label{eq:basic-variation}
q^\lambda(t) := q(t;q^\lambda_0, (1 - \lambda) \delta_u + \lambda \nu).
\end{equation}
The following lemma assures us that such a variation approaches $q^0(t)$ at a uniform linear rate:
\begin{lemma}
\label{lem:sliding-bound}
There is a constant $c_0$ such that for any $(q_0, u) \in \cX^\circ$ the variation $q^\lambda(t)$ defined by \eqref{eq:basic-variation} satisfies
\begin{equation}
\label{eq:sliding-bound}
  \left\|\psi_t(q^\lambda(t)) - \psi_t(q^0(t)) \right\|_E < c_0 \lambda
\end{equation}
when $\lambda$ is sufficiently small.
\end{lemma}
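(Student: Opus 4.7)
The plan is to reduce the estimate to a Gronwall argument in the Lagrangian chart. Set $x^\lambda(t) := \psi_t(q^\lambda(t))$ as long as $q^\lambda(t)$ remains in $P_{0,t}(\cO)$. By construction of the relaxed dynamics and the defining relation \eqref{eq:dotx}, these curves satisfy
\begin{equation*}
  \dot{x}^\lambda(t) = (1-\lambda)\, g(t, x^\lambda(t), u(t)) + \lambda\, \what{g}(t, x^\lambda(t), \nu(t)),
\end{equation*}
together with $\dot{x}^0(t) = g(t, x^0(t), u(t))$ and $x^\lambda(0) - x^0(0) = \lambda\, \varphi_*(q_0) v_0$, a vector of norm at most $\lambda$.

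Next I would establish that for all sufficiently small $\lambda$ the curve $x^\lambda(t)$ stays inside $\varphi(\cO)$, so that the chart-based equation is valid on the whole interval $[0,T]$. Because $(q_0, u) \in \cX^\circ$, the reference curve $x^0(t)$ lies in the open set $\varphi(\cO_0)$ whose closure is contained in $\varphi(\cO)$, so there is an open tube of positive width $r$ around $x^0([0,T])$ contained in $\varphi(\cO)$. Since $\what{g}$ inherits the pointwise bound $m_g \in L^1$ from Assumption $(D)$ as a convex combination of values of $g$, a standard continuation argument shows that $x^\lambda$ cannot escape this tube on $[0,T]$ once $\lambda$ is small enough; the required smallness depends only on $r$, $m_g$, and $k_g$, not on the particular pair $(q_0, u) \in \cX^\circ$.

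Once confinement is secured, write
\begin{equation*}
\begin{aligned}
  x^\lambda(t) - x^0(t) = \lambda\, \varphi_*(q_0) v_0 &+ (1-\lambda) \int_0^t \bigl[g(s, x^\lambda, u) - g(s, x^0, u)\bigr]\, ds \\
  &+ \lambda \int_0^t \bigl[\what{g}(s, x^\lambda, \nu) - g(s, x^0, u)\bigr]\, ds.
\end{aligned}
\end{equation*}
The Lipschitz estimate \eqref{eq:integrable-lip} bounds the first integrand by $k_g(s)\,\|x^\lambda(s) - x^0(s)\|_E$ (and the same bound transfers to $\what{g}$, again by convex combination), while the bound \eqref{eq:integrable-bdd} bounds the last integrand by $2 m_g(s)$. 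Collecting,
\begin{equation*}
  \|x^\lambda(t) - x^0(t)\|_E \le \lambda\bigl(1 + 2 \|m_g\|_{L^1}\bigr) + \int_0^t k_g(s)\, \|x^\lambda(s) - x^0(s)\|_E\, ds,
\end{equation*}
and Gronwall's inequality yields $\|x^\lambda(t) - x^0(t)\|_E \le c_0 \lambda$ with $c_0 := (1 + 2\|m_g\|_{L^1}) \exp(\|k_g\|_{L^1})$, a constant depending only on the data of the Lagrangian chart and thus independent of $(q_0, u) \in \cX^\circ$.

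The only nontrivial step is the a priori confinement of $x^\lambda$ to $\varphi(\cO)$; one must argue carefully that the continuation argument survives the passage to the relaxed control $(1-\lambda)\delta_u + \lambda\nu$ and produces a threshold for $\lambda$ that is uniform over $\cX^\circ$. Everything after that is the routine integral manipulation above, in which the essential point is that the bound on the initial displacement and on the ``mismatch'' integrand $\what{g}(s,x^\lambda,\nu) - g(s,x^0,u)$ are both $O(\lambda)$, so that linearity of the Gronwall bound in the forcing term delivers the required linear dependence on $\lambda$.
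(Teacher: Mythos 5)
Your proposal is correct and follows essentially the same route as the paper's proof: pass to the Lagrangian chart, split the difference $x^\lambda - x^0$ into the $O(\lambda)$ initial displacement, a term controlled by the Lipschitz bound $k_g$, and a control-mismatch term bounded by $2\lambda\|m_g\|_{L^1}$, then apply Gronwall to obtain $c_0 = (1+2\|m_g\|_{L^1})\exp(\|k_g\|_{L^1})$. The paper's three-line proof silently assumes the confinement of $x^\lambda$ to the chart for small $\lambda$ (it is built into the phrase ``when $\lambda$ is sufficiently small''), so your explicit continuation argument is a welcome but inessential addition rather than a different method.
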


\begin{proof}
Define $x^\lambda(t) = \psi_t(q^\lambda(t))$ and note that if $x^\lambda_0 := \varphi(q_0) + \lambda \varphi_*(q_0) v_0$ then $x^\lambda(t) = x(t;x^\lambda_0,  (1 - \lambda) \delta_u + \lambda \nu)$.
For $t \in \left[0,T\right]$ we have
\begin{equation*}
  \begin{aligned}
&    \left\|x^\lambda(t) - x^0(t) \right\|_E \le \lambda + \int_0^t \left\|\what{g}(s, x^\lambda(s), \delta_{u(s)}) - \what{g}(s, x^0(s), \delta_{u(s)}) \right\|_E \, ds + 2 \lambda \left\|m_g\right\|_{L^1} \\
& \hspace{1cm} \le \lambda + \int_0^t k_g(s) \left\|x^\lambda(s)- x^0(s) \right\|_E \, ds + 2 \lambda \left\|m_g\right\|_{L^1}.
  \end{aligned}
\end{equation*}
The result now follows from the Gronwall lemma.
\end{proof}

As an important consequence, we may approximate relaxed variation $q^\lambda$ using a carefully constructed ``usual control.'' Moreover, this approximation is compatible with pseudometric $\rho$. The following Lemma can be established using an application of Gronwall lemma, along with Lemma \ref{lem:chattering}.
\begin{lemma}
\label{lem:pseudo-approx}
There exist constants $c_1, c_2$ such that for any pair $(q_0, u) \in \cX^\circ$, for any relaxed control $\nu \in \cM$, $v_0 \in \cB_{q_0}$, $\ve > 0$, and $\lambda> 0$ sufficiently small there is a control $w^\lambda \in \cU$ such that
\begin{equation}
\label{eq:lambda-approx}
  \max_{t \in \left[0,T\right]} \left\|\psi_t(q^\lambda(t)) - \psi_t(q(t;q^\lambda_0, w^\lambda)) \right\|_E < \ve,
\end{equation}
where $q^\lambda_0$ and $q^\lambda(t)$ are defined by \eqref{eq:initial-variation} and \eqref{eq:basic-variation}, respectively.
In addition,
\begin{equation}
\label{eq:rho-bound}
  \rho(q_0^\lambda,w^\lambda;q_0, u) < c_1 \lambda + c_2 \ve.
\end{equation}
\end{lemma}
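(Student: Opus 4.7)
The plan is to generalize the argument of Lemma \ref{lem:approx-relax} by applying the chattering lemma (Lemma \ref{lem:chattering}) to a single vector-valued integrand that bundles together the dynamics $g$, its derivative $g_x$, and the integrable majorant $m_g$. This extra bookkeeping is what upgrades the uniform approximation \eqref{eq:lambda-approx} into the $\lambda$-linear pseudometric estimate \eqref{eq:rho-bound}. Writing $\nu(t) = \sum_{i=1}^n \lambda_i \delta_{u_i(t)}$, set convex coefficients $\mu_0^\lambda := 1-\lambda$ and $\mu_i^\lambda := \lambda \lambda_i$ for $i \ge 1$, and let $x^\lambda(t) := \psi_t(q^\lambda(t))$, which remains in $\varphi(\cO)$ for $\lambda$ sufficiently small thanks to Lemma \ref{lem:sliding-bound}. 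Apply Lemma \ref{lem:chattering} with these coefficients, tolerance $\ve/(n+1)$, and the Bochner integrable functions
$$h_i(t) := \bigl(g(t, x^\lambda(t), u_i(t)),\, g_x(t, x^\lambda(t), u_i(t)),\, m_g(t)\bigr) \in E \times L(E,E) \times \R,$$
where $u_0 := u$. This yields disjoint measurable sets $A_0^\lambda, \dots, A_n^\lambda$ with $m(A_i^\lambda) = \mu_i^\lambda T$, and one defines $w^\lambda(t) := u(t)\chi_{A_0^\lambda}(t) + \sum_{i=1}^n u_i(t)\chi_{A_i^\lambda}(t)$.

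Reading the chattering estimate on the $g$-coordinate of $h_i$ and summing over $i$ gives
$$\Bigl\| \int_0^t g(s, x^\lambda(s), w^\lambda(s))\, ds - \bigl(x^\lambda(t) - x_0^\lambda\bigr) \Bigr\|_E < \ve$$
for all $t \in [0,T]$, since the integrated relaxed dynamics produce $x^\lambda(t) - x_0^\lambda$ exactly. Writing $y^\lambda(t) := x(t; x_0^\lambda, w^\lambda)$, subtracting the integral equations for $y^\lambda$ and $x^\lambda$ and applying the Lipschitz bound in Assumption ($D$) yields
$$\|y^\lambda(t) - x^\lambda(t)\|_E \le \ve + \int_0^t k_g(s)\, \|y^\lambda(s) - x^\lambda(s)\|_E\, ds,$$
so Gronwall's inequality produces \eqref{eq:lambda-approx} after rescaling $\ve$.

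For the pseudometric estimate, the term $\|x_0^\lambda - x_0\|_E$ is at most $\lambda$ since $v_0 \in \cB_{q_0}$. Partition $[0,T] = A_0^\lambda \cup A^\lambda$, with $A^\lambda := \bigcup_{i \ge 1} A_i^\lambda$ of total measure $\lambda T$. On $A_0^\lambda$ we have $w^\lambda = u$, so Assumption ($D$), combined with \eqref{eq:lambda-approx} and Lemma \ref{lem:sliding-bound}, bounds $\|g(t, y^\lambda(t), w^\lambda(t)) - g(t, x^0(t), u(t))\|_E$ by $k_g(t)(\ve + c_0 \lambda)$, integrating to $\|k_g\|_{L^1}(\ve + c_0 \lambda)$. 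On $A^\lambda$ the integrand is dominated pointwise by $2 m_g(t)$, and here lies the main obstacle: since $m_g$ is only $L^1$ and not bounded, the naive estimate based on $m(A^\lambda) = \lambda T$ alone is merely $o(1)$ as $\lambda \to 0$, not $O(\lambda)$. This is precisely why $m_g$ was bundled into $h_i$: evaluating the chattering estimate on the $m_g$-coordinate at $t = T$ gives $\int_{A_i^\lambda} m_g\, dt \le \lambda \lambda_i \|m_g\|_{L^1} + \ve/(n+1)$, hence $\int_{A^\lambda} 2 m_g\, dt \le 2\lambda \|m_g\|_{L^1} + 2\ve$. The $g_x$-integral is treated identically, and collecting terms yields \eqref{eq:rho-bound} with $c_1, c_2$ depending only on $c_0$, $\|k_g\|_{L^1}$, and $\|m_g\|_{L^1}$.
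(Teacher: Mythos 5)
Your proof is correct, and it rests on the same two pillars as the paper's argument: the chattering construction of Lemma \ref{lem:chattering} followed by a Gronwall estimate. The organization differs in two small but genuine ways. First, the paper applies the chattering lemma twice --- once to realize the relaxed control $\nu$ as a usual control $v$, and a second time to interleave $u$ and $v$ on sets $A_0, A_1$ of measures $(1-\lambda)T$ and $\lambda T$ --- whereas you apply it once, directly to the $n+1$ convex coefficients $1-\lambda, \lambda\lambda_1, \dots, \lambda\lambda_n$; your one-shot version is slightly cleaner and avoids tracking the intermediate control $v$. Second, and more substantively, you both confront the same obstacle in proving \eqref{eq:rho-bound}: since $m_g$ is only in $L^1$, the fact that the ``new'' control acts on a set of measure $\lambda T$ gives only an $o(1)$ bound on $\int_{A^\lambda} m_g\,dt$, not the required $O(\lambda)$. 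The paper resolves this by refining the partition used to build $A_0, A_1$ until $\int_{A_1} m_g\,ds \le (\lambda + \ve)\left\|m_g\right\|_{L^1}$; you resolve it by adjoining $m_g$ as a coordinate of the vector-valued integrand $h_i \in E \times L(E,E) \times \R$, so that the single chattering estimate simultaneously delivers $\int_{A_i^\lambda} m_g\,dt \le \lambda\lambda_i \left\|m_g\right\|_{L^1} + \ve/(n+1)$. These are two implementations of the same idea (the paper's refinement step is, in effect, an application of the chattering estimate to $m_g$ itself), but your bundling makes the dependence of $c_1, c_2$ on only $c_0$, $\left\|k_g\right\|_{L^1}$, $\left\|m_g\right\|_{L^1}$ --- and in particular their independence of $n$, via the tolerance $\ve/(n+1)$ --- completely explicit, which is a point the reader must otherwise supply. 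The only detail left implicit in both arguments is the continuation argument guaranteeing that $q(t;q_0^\lambda, w^\lambda)$ stays in $P_{0,t}(\cO)$ so that $\psi_t$ may be applied to it; this follows for small $\lambda$ and $\ve$ from $(q_0,u) \in \cX^\circ$ and is harmless.
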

\comment{
\begin{proof}
Let $\ve > 0$ be given and write $x^\lambda(t) := \psi_t(q^\lambda(t))$. By Lemma \ref{lem:chattering}, we may choose control $v$ so that
\begin{equation}
\label{eq:apply-chattering}
 \max_{t \in \left[0,T\right]} \left\| \int_0^t \left\{ \what{g}(s,x^\lambda(s), \nu(s)) - \what{g}(s,x^\lambda(s), \delta_{v(s)}) \right\}\, d s \right\| < \ve.
\end{equation}
By a second application of Lemma \ref{lem:chattering} we may choose sets $A_0, A_1$ such that
\begin{equation*}
    \left\| (1 - \lambda) \int_0^t \what{g}(s,x^\lambda(s), \delta_{u(s)}) \, d s-\int_{A_0 \cap [0,t]} \what{g}(s,x^\lambda(s), \delta_{u(s)}) \, d s \right\| \le \ve
\end{equation*}
\begin{equation*}
    \left\|  \lambda \int_0^t \what{g}(s,x^\lambda(s), \delta_{v(s)}) \, d s-\int_{A_1 \cap[0,t]} \what{g}(s,x^\lambda(s), \delta_{v(s)}) \, d s \right\|  \le \ve.
\end{equation*}
By decreasing the diameter of the partition being used to construct $A_0, A_1$ we may also assume without loss of generality that
\begin{equation*}
  \int_{A_1} m_g(s) \, ds \le \left(\lambda + \ve\right) \left\|m_g\right\|_{L^1}.
\end{equation*}

Set $w(t) = \chi_{A_0}(t) u(t) + \chi_{A_1}(t) v(t)$. We first check \eqref{eq:lambda-approx}. For any $t \in \left[0,T\right]$,
\begin{equation}
  \begin{aligned}
  & \left\|x^\lambda(t) - x(t;x_0, w^\lambda) \right\|_{E} \le \lambda \\
  & \hspace{1cm} + \left\| (1 - \lambda) \int_0^t \what{g}(s,x^\lambda(s), \delta_{u(s)}) \, d s- \int_{A_0 \cap [0,t]} \what{g}(s,x^\lambda(s), \delta_{u(s)}) \, d s \right\|_E \\
  & \hspace{1cm} + \left\|  \lambda \int_0^t \what{g}(s,x^\lambda(s), \nu(s)) \, d s-\lambda \int_0^t \what{g}(s,x^\lambda(s),\delta_{v(s)}) \, d s \right\|_E \\
  & \hspace{1cm} + \left\|  \lambda \int_0^t \what{g}(s,x^\lambda(s), \delta_{v(s)}) \, d s-\int_{A_1 \cap[0,t]} \what{g}(s,x^\lambda(s), \delta_{v(s)}) \, d s \right\|_E \\
     &\hspace{1cm}  + \int_{A_0 \cap [0,t]} \left\| \what{g}(s,x^\lambda(s), \delta_{u(s)})-\what{g}(s,x(s;x_0, w^\lambda) , \delta_{u(s)})\right\|_E \, d s \\
      &\hspace{1cm}  + \int_{A_1 \cap[0,t]}\left\| \what{g}(s,x^\lambda(s), \delta_{v(s)})-\what{g}(s,x(s;x_0, w^\lambda) , \delta_{v(s)})\right\|_E \, d s \\
     & \hspace{1cm}  \le \ve + \lambda \ve +  \ve + \int_0^t k_g(s) \left\|x^\lambda(s) - x(s;x_0, w^\lambda) \right\|_{E} \, d s. \end{aligned}
\end{equation}
By Gronwall's lemma, we find that for any $t \in \left[0,T\right]$,
\begin{equation}
\left\|x^\lambda(t) - x(t;x_0, w^\lambda) \right\|_{E} \le 3 \ve \exp \left( \left\|k_g \right\|_{L^1} \right).
\end{equation}
Since $\ve > 0$ was arbitrary this proves \eqref{eq:lambda-approx}. We next verify \eqref{eq:rho-bound}:
\begin{equation*}
  \begin{aligned}
&      \rho(q_0^\lambda,w^\lambda;q_0, u) \le \lambda \\
 & \hspace{1cm} + \int_0^T \left\|g(t,x(t;x(t;x^\lambda_0, w^\lambda), w^\lambda(t)))-g(t,x(t;x(t;x_0, u), u(t)))\right\|_E \, dt \\
      & \hspace{1cm} + \int_0^T \left\|g_x(t,x(t;x(t;x^\lambda_0, w^\lambda), w^\lambda(t)))-g_x(t,x(t;x(t;x_0, u), u(t)))\right\|_{L(E,E)} \, dt \\
        & \le \lambda + 2 \int_{[0,t] \cap A_0} k_g(t) \left\|x(t;x^\lambda_0, w^\lambda)-x(t;x(t;x_0, u), u(t))\right\|_E \, dt \\
      & \hspace{1cm} + 4\left(\lambda+\ve\right) \left\|m_g\right\|_{L^1} \\
              & \le \lambda + 2 \int_{[0,t] \cap A_0} k_g(t) \left\|x(t;x^\lambda_0, w^\lambda)-x(t;x(t;x_0, u), u(t))\right\|_E \, dt \\
      & \hspace{1cm} + 4\left(\lambda+\ve\right) \left\|m_g\right\|_{L^1}
  \end{aligned}
\end{equation*}
The estimate \eqref{eq:rho-bound} now follows from \eqref{eq:lambda-approx} and \eqref{eq:sliding-bound}.
\end{proof}}

Thus, we may vary any trajectory corresponding to $(q_0, u) \in \cX^\circ$ using \eqref{eq:basic-variation} and the resulting variation can be approximated nicely using controls $w^\lambda \in \cU$.

\section{Set-Constrained Lower Derivative and the Free Endpoints Problem}

In this section we prove the Pontryagin Maximum Principle under the assumption that $S = M \times M$. The following proposition gives us a useful characterization of the adjoint arc. Let $Q_{s,t}$ be the flow of a $C^1$-smooth vector field $X_t$ with locally integrable Lipschitz first derivative and let $H_X : \left[0,T\right] \times T^*M\rightarrow \R$ be defined by $H_X(t,\zeta):= \left<\zeta, X_t(q)\right>$.
\begin{proposition}
\label{prop:pull-back-flow}
If $\eta \in T_{Q_{0,T}(q_0)}^*M$ then $\zeta(t) := -Q_{t,T}^*\eta$ if and only if $\zeta$ is a solution to
\begin{equation*}
  \dot{\zeta}(t) = \ora{H}_X(t,\zeta) \quad a.a. \; t
\end{equation*}
with $\zeta(T) = - \eta$. In particular, if $Q_{s,t}$ is the flow of the vector field $(t,q) \mapsto f(t, q, u(t))$ then $\zeta$ satisfies \eqref{eq:adjoints}.
\end{proposition}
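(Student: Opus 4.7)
The plan is to work in local coordinates along the trajectory $q_*(t) := Q_{0,t}(q_0)$ so that the symplectic ODE on $T^*M$ reduces to a coupled system in $E \times E^*$ which can be handled by standard Carathéodory theory. I would first fix a Lagrangian chart (or any chart whose domain covers $q_*([0,T])$, possibly after splicing together finitely many charts) and unpack the Hamiltonian lift of $H_X(t,\zeta) = \langle \zeta, X_t(q)\rangle$. Because $H_X$ is fiber-linear, a direct computation from the canonical symplectic structure on $T^*M$ shows that in cotangent coordinates $(q,p)$,
\begin{equation*}
\ora{H}_X(t,q,p) = \bigl(X_t(q),\, -DX_t(q)^* p\bigr),
\end{equation*}
so the equation $\dot{\zeta}(t) = \ora{H}_X(t,\zeta(t))$ decouples into $\dot q = X_t(q)$ (whose solution starting at $q_0$ is precisely $q_*$) and the linear adjoint ODE $\dot p(t) = -DX_t(q_*(t))^* p(t)$.

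Next I would prove the ``only if'' direction by direct verification. Setting $\zeta(t) := -Q_{t,T}^*\eta$ and using the composition identity $Q_{t,T}\circ Q_{s,t} = Q_{s,T}$, differentiation at $s=t$ yields the variational identity
\begin{equation*}
\frac{d}{dt}\, dQ_{t,T}(q_*(t)) = -\, dQ_{t,T}(q_*(t)) \circ DX_t(q_*(t)) \quad a.a.\; t.
\end{equation*}
Dualizing gives $\frac{d}{dt} Q_{t,T}^*\eta = -DX_t(q_*(t))^* Q_{t,T}^*\eta$, so $\zeta(t)$ satisfies the linear adjoint equation above and hence $\dot\zeta = \ora{H}_X(t,\zeta)$ in coordinates. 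The terminal condition $\zeta(T) = -\eta$ is immediate from $Q_{T,T} = \mathrm{Id}$. Absolute continuity follows from the $L^1$-boundedness of $DX_t$ granted by Assumption ($D$).

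For the ``if'' direction I would invoke uniqueness for the linear Carathéodory ODE $\dot p(t) = -DX_t(q_*(t))^*p(t)$ on the reflexive space $E^*$. Under Assumption ($D$) the operator norm of $DX_t(q_*(t))^*$ is bounded by an $L^1$ function, so solutions with a prescribed terminal value are unique; since both $-Q_{t,T}^*\eta$ and the given $\zeta(t)$ share the value $-\eta$ at $t=T$, they coincide. The ``in particular'' statement then follows by taking $X_t(q) := f(t,q,u_*(t))$, noting that Assumption ($D$) gives precisely the regularity needed to define the flow $Q_{s,t}$ and its pullback, and that $H_X = H(\cdot,\cdot,u_*(\cdot))$.

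The main obstacle is not conceptual but technical: ensuring that the cotangent-lift computation and the variational identity genuinely work in the Banach-manifold setting. Concretely, I need $Q_{s,t}$ to be $C^1$ in $q$ with an $L^1$-integrable Lipschitz derivative along $q_*$, and I need the adjoint operator $DX_t(q_*(t))^*$ to depend measurably on $t$ in $L(E^*,E^*)$; both are supplied by Assumption ($D$) and reflexivity of $E$, but it is worth a brief justification rather than appealing implicitly to the finite-dimensional intuition.
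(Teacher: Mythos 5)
Your argument is correct, and it actually supplies more than the paper does: the paper's entire proof of Proposition \ref{prop:pull-back-flow} is a citation to the $\R^n$ case in \cite{kl2014} together with the assertion that the generalization to the Banach setting is easy. The route you take --- pass to canonical coordinates $(q,p)$, observe that the lift of the fiber-linear Hamiltonian $H_X$ is $\bigl(X_t(q), -DX_t(q)^*p\bigr)$, obtain the ``only if'' direction by differentiating the composition identity $Q_{t,T}\circ Q_{s,t}=Q_{s,T}$ and dualizing, and get the ``if'' direction from uniqueness for the linear Carath\'eodory equation $\dot p(t) = -DX_t(q_*(t))^*p(t)$ with prescribed terminal data --- is the standard one and is presumably what the cited reference carries out in finite dimensions; nothing in it breaks over a Banach manifold. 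Two small points of hygiene rather than gaps. First, the regularity you attribute to Assumption ($D$) is, in the general statement, a direct hypothesis on $X_t$ (the proposition assumes $X_t$ is $C^1$ with locally integrable Lipschitz first derivative); Assumption ($D$) enters only in the ``in particular'' clause, where it guarantees that $X_t(q) := f(t,q,u(t))$ has these properties. Second, for the uniqueness step it is worth recording that $\left\|DX_t(q_*(t))^*\right\|_{L(E^*,E^*)} = \left\|DX_t(q_*(t))\right\|_{L(E,E)}$, so the $L^1$ majorant transfers to the adjoint equation and Gronwall yields uniqueness of the backward solution, after which the two curves sharing the value $-\eta$ at $t=T$ must coincide. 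Reflexivity of $E$ is not actually needed for this proposition; it matters elsewhere in the paper for weak compactness of subgradients, not for the flow computation.
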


\begin{proof}A proof for the case in which $M$ is modeled over $\R^n$ can be found in \cite{kl2014}. This proof can be easily generalized to the Banach space setting.
\end{proof}

We also have the following, whose proof we leave to the reader.
\begin{proposition}
\label{prop:int-pmp}
  If absolutely continuous mapping $\zeta : \left[0,T\right] \rightarrow T^*M$ and control $u_*$ satisfy
  \begin{equation}
  \label{eq:int-pmp}
    \inf_{u \in \cU} \int_0^T H(t, \zeta(t), u_*(t)) - H(t, \zeta(t), u(t)) \, dt \ge 0
  \end{equation}
  then \eqref{eq:max-princ} holds pointwise almost everywhere.
\end{proposition}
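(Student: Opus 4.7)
\textbf{The plan} is to establish the contrapositive through a standard needle-variation combined with a measurable-selection argument. Suppose, for contradiction, that \eqref{eq:max-princ} fails on a set of positive measure, i.e.\ the set
\begin{equation*}
A := \{ t \in [0,T] : H(t, \zeta(t), u_*(t)) < \sup_{u \in \U} H(t, \zeta(t), u) \}
\end{equation*}
has $m(A) > 0$. Using that $\U$ is a complete separable metric space, fix a countable dense subset $\{v_k\}_{k \ge 1} \subset \U$ and define, for each pair of positive integers $(k,n)$,
\begin{equation*}
A_{k,n} := \{ t \in [0,T] : H(t, \zeta(t), v_k) - H(t, \zeta(t), u_*(t)) > 1/n \}.
\end{equation*}
Measurability in $t$ of $H(t, \zeta(t), v_k)$ and of $H(t, \zeta(t), u_*(t))$, which follows from Assumption ($D$) together with absolute continuity of $\zeta$, renders each $A_{k,n}$ Lebesgue measurable.

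Next, I would argue that $A \subset \bigcup_{k,n} A_{k,n}$ modulo a null set, so that $m(A_{k_0,n_0}) > 0$ for some pair $(k_0, n_0)$; denote $A' := A_{k_0, n_0}$ and $\delta := 1/n_0$. I then introduce the needle-variation control
\begin{equation*}
\tilde{u}(t) := \begin{cases} v_{k_0} & t \in A' \\ u_*(t) & t \notin A' \end{cases}
\end{equation*}
and verify $\tilde{u} \in \cU$ in the paper's sense (pointwise a.e.\ limit of continuous functions), which follows from approximating $\chi_{A'}$ by continuous cutoffs on an inner regular closed exhaustion of $A'$ and splicing this with the continuous sequence approximating $u_*$. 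Plugging $\tilde{u}$ into \eqref{eq:int-pmp} then yields
\begin{equation*}
\int_0^T \bigl[ H(t,\zeta(t), u_*(t)) - H(t,\zeta(t), \tilde{u}(t)) \bigr] \, dt \le -\delta \cdot m(A') < 0,
\end{equation*}
in direct contradiction with the hypothesis \eqref{eq:int-pmp}.

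The main obstacle is the covering claim $A \subset \bigcup_{k,n} A_{k,n}$: since $\U$ carries only a metric structure and continuity of $H$ in the $u$-variable is not explicitly listed among the problem data, this reduction requires either continuous dependence of $f$ on $u$ (a standard additional regularity for control systems) or a Kuratowski--Ryll-Nardzewski-type measurable selection yielding, for each $\delta > 0$, a measurable map $u^{\delta} : [0,T] \rightarrow \U$ satisfying $H(t, \zeta(t), u^\delta(t)) > H(t, \zeta(t), u_*(t)) + \delta$ on a subset of $A$ with positive measure. Once the countable reduction is in place, the needle construction and Gronwall-free direct substitution complete the argument, so this selection issue is the sole nonroutine point.
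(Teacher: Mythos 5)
Your argument is correct and is precisely the standard needle-variation/countable-dense-reduction proof that the authors presumably intend, since the paper explicitly leaves this proposition to the reader. The one caveat you flag --- that covering $A$ by the sets $A_{k,n}$ needs continuity of $u \mapsto f(t,q,u)$ --- is a reasonable implicit hypothesis here, as some such regularity in $u$ is already required for $t \mapsto f(t,q,u(t))$ to be measurable and hence for the dynamics \eqref{eq:dynamic-constraint} to make sense for measurable controls, so your proof closes the gap the paper leaves open.
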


The basic idea for the proof of the maximum principle is the following. Find $(\xi, \eta) \in \pL \ell(x_*(0), x_*(T))$ such that $-Q_{0,T}^*\eta = \xi$, where $Q_{s,t}$ is the flow of $(t, q) \mapsto f(t,q,u_*(t))$. Define $\zeta(t) = -Q_{t, T}^* \eta$, so that $\left(\zeta(0), -\zeta(T)\right) \in \pL \ell(x_*(0), x_*(T))$ and verify that $\zeta$ can be chosen to satisfy \eqref{eq:int-pmp}. When this is possible, we arrive at \eqref{eq:adjoints} and \eqref{eq:max-princ} through Propositions \ref{prop:pull-back-flow} and \ref{prop:int-pmp}, respectively.

In order to carry out this program, we introduce a new type of directional derivative. Recall that
the \emph{lower Dini derivative} of a function $\ell : E \rightarrow \R$ is defined by
\begin{equation*}
D\ell(x;v) : = \inf_{\stackrel{v_n \rightarrow v}{ \lambda_n \downarrow 0}} \lim_{n \rightarrow \infty} \frac{\ell(x + \lambda_n v_n) - \ell(x)}{\lambda_n}.
\end{equation*}
The lower Dini derivative has played an important role in the study of nonsmooth functions since its appearance in the late $19^{th}$ century. More recently, a concept of \emph{weak lower Dini derivative} was introduced in \cite{cl1994}, the difference being that the sequences $v_n$ may converge to $v$ only weakly. We introduce the following intermediate notion:
\begin{definition}
Let $V \subset E$ be a closed, bounded, convex set and $v \in V$ be given. The \emph{set-constrained weak lower Dini derivative in direction $v$} is given by
\begin{equation*}
D^w_V \ell(x;v) : = \inf \lim_{n \rightarrow \infty} \frac{\ell(x + \lambda_n v_n) - \ell(x)}{\lambda_n}
\end{equation*}
where the infimum is over sequences $v_n \in V$ converging weakly to $v$ and $\lambda_n \downarrow 0$.
\end{definition}

\begin{remark}
  Since this construction is entirely local, we may define $D^w_V \ell(q;v)$ for a function $\ell : M \rightarrow \R$ and closed, bounded, convex set $V \subset T_qM$ using local coordinates.
\end{remark}

For locally Lipschitz function $\ell$, lower bounds on $D^w_V \ell(q;v)$ can be attained in terms of the limiting subgradient through the following Subbotin type result (see also \cite{clarkeand1998})
\begin{proposition}
\label{prop:lower-bound}
  Suppose that $\ell : M \rightarrow \R$ is locally Lipschitz rank $k_\ell$ and $V \subset T_qM$ is closed, bounded, and convex. For any $r$ such that $D^{w}_V \ell(q;v) \ge r$ for all $v \in V$ there exists $\zeta \in \pL \ell(q)$ such that $\left<\zeta, v\right> \ge r$ for all $v \in V$.
\end{proposition}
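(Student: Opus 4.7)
The plan is to invoke the Clarke--Ledyaev multidirectional mean value inequality (MMVI), whose infinite-dimensional form is available here precisely because of the Banach Space Assumption (the smooth bump supplies the variational principle underlying the MMVI; when $M = E$, reflexivity alone would suffice). Using the coordinate invariance \eqref{eq:invariance}, I would reduce to the case where $\ell : E \to \R$ is locally Lipschitz of rank $k_\ell$ near $0$, with $q$ corresponding to $0 \in E$ and $V \subset E$ closed, bounded, and convex. The goal is to produce a sequence $z_n \to 0$ with $\ell(z_n) \to \ell(0)$ and Fr\'echet subgradients $\zeta_n \in \pF \ell(z_n)$ satisfying $\langle \zeta_n, v\rangle \ge r - 1/n$ for every $v \in V$. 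Since $\|\zeta_n\|_{E^*} \le k_\ell$ and $E$ is reflexive, a weakly convergent subsequence exists and its limit is a limiting subgradient $\zeta \in \pL \ell(0)$ with $\langle \zeta, v\rangle \ge r$ on $V$.

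The first step would upgrade the pointwise hypothesis to a uniform Dini-type estimate: for every $\ve > 0$ there exists $\lambda_\ve > 0$ with
\begin{equation*}
\ell(\lambda v) - \ell(0) \ge (r - \ve)\lambda \qquad \text{for all } v \in V,\ \lambda \in (0, \lambda_\ve].
\end{equation*}
If this failed, one could extract $v_n \in V$ and $\lambda_n \downarrow 0$ violating it; because $V$ is closed, bounded, and convex in a reflexive space, it is weakly compact, so $v_n \weakly v_* \in V$ along a subsequence. The definition of $D^w_V \ell$ would then yield $D^w_V \ell(0; v_*) \le r - \ve < r$, contradicting the hypothesis.

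Next, for each $\ve > 0$ set $\lambda := \lambda_{\ve/2}$, so that $\inf_{y \in \lambda V}[\ell(y) - \ell(0)] \ge (r - \ve/2)\lambda > (r - \ve)\lambda$. Applying the MMVI with base point $0$ and closed convex target set $\lambda V$ would produce a point $z$ in a small neighborhood of $\mathrm{conv}(\{0\} \cup \lambda V)$ together with a Fr\'echet subgradient $\zeta_\ve \in \pF \ell(z)$ satisfying
\begin{equation*}
\langle \zeta_\ve, y \rangle \ge (r - \ve)\lambda \qquad \text{for all } y \in \lambda V.
\end{equation*}
Dividing by $\lambda$ gives $\langle \zeta_\ve, v\rangle \ge r - \ve$ for every $v \in V$, and the Lipschitz property controls $|\ell(z) - \ell(0)| \le k_\ell \|z\|$, which is small provided $\lambda$ is small enough.

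Taking $\ve = 1/n$ and $\lambda_n$ shrinking fast enough that $z_n \to 0$, the sequence $(z_n, \zeta_n)$ required in the first paragraph is in hand, and the weak-compactness argument concludes the proof. The principal obstacle is the correct deployment of the MMVI in the present Banach setting; the smoothness condition in the Banach Space Assumption is tailored exactly to make this tool available, so beyond careful parameter bookkeeping (keeping $\ve$, $\lambda$, and the MMVI's localization error compatible) no essentially new technical difficulty should appear.
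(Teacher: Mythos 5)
Your proposal is correct and follows essentially the same route as the paper: both arguments rest on the multidirectional mean value inequality applied over the sets $x + \lambda_n V$, weak compactness of the closed bounded convex set $V$ in the reflexive space $E$, and weak sequential compactness of the norm-bounded Fr\'echet subgradients $\zeta_n$ to produce the limiting subgradient. The only difference is organizational: you invoke the weak compactness of $V$ up front, in a contrapositive step that converts the pointwise hypothesis into the uniform estimate $\ell(x+\lambda v)-\ell(x) \ge (r-\ve)\lambda$ on $V$ before applying the mean value inequality, whereas the paper applies the inequality first and only afterwards extracts near-minimizers $w_n \in V$ whose weak limit is the point at which the hypothesis $D^w_V\ell \ge r$ is invoked; the two bookkeeping schemes are interchangeable.
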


\begin{proof}
It suffices to prove the result for the case $M = E$.
Let $\lambda_n \downarrow 0$ be given and for a fixed integer $n$ consider the set $x + \lambda_n V$. By the multidirectional mean value inequality \cite{cl1994,zhu1998clarke}, there exists $z_n \in x + \lambda_n V + \lambda_n \B$ and $\zeta_n \in \pF \ell(z_n)$ such that
\begin{equation*}
 \inf_{w \in V, e \in \B} \ell(x + \lambda_n w + \lambda^2_n e) - \ell(x) - \lambda^2_n \le \lambda_n\left<\zeta_n, v\right>
\end{equation*}
for all $v \in V$. Choose $w_n \in V$ and $e_n \in \B$ such that
\begin{equation*}
  \ell(x + \lambda_n w_n + \lambda^2_n e_n) \le \inf_{w \in V, e \in \B} \ell(x + \lambda_n w + \lambda^2_n e) + \lambda^2_n.
\end{equation*}
 Then we have, for all $v \in V$,
\begin{equation*}
    \ell(x + \lambda_n w_n + \lambda^2_n e_n) - \ell(x) - 2 \lambda_n^2 \le \lambda_n \left< \zeta_n, v\right>.
\end{equation*}
Dividing by $\lambda_n > 0$ and using the Lipschitz property of $\ell$ we find that
\begin{equation*}
  \frac{\ell(x + \lambda_n w_n) - \ell(x)}{\lambda_n} - 2 \lambda_n - k_\ell \lambda_n \le \left<\zeta_n, v\right>
\end{equation*}
for any $v \in V$. Since $\ell$ is locally Lipschitz the $\zeta_n$ are bounded in norm we may pass to a subsequence such that $\zeta_n \weakly \zeta \in \pL \ell(x)$ and $w_n \weakly w \in V$. Taking the limit we find
\begin{equation*}
  r \le D^{w}_V \ell(x;w) \le \lim_{n \rightarrow \infty}  \frac{\ell(x + \lambda_n w_n) - \ell(x)}{\lambda_n} - 2 \lambda_n - k_\ell \lambda_n \le \left<\zeta, v\right>
\end{equation*}
and this completes the proof.
\end{proof}

For a given pair $(q_0, u) \in \cX^\circ$, we are interested in a set $V$ arising from infinitesimal perturbations $(v_0, v_1) \in T_{q_0}M \times T_{q(T;q_0,u)}M$ defined through
\begin{equation}
\label{eq:v0v1-source}
  (v_0, v_1) = \left. \frac{d}{d \lambda} \right|_{\lambda = 0} \left(q^\lambda(0), q^\lambda(T)\right)
\end{equation}
where $q^\lambda$ corresponds to some $v_0 \in \cB_{q_0}$ and $\nu \in \cM$ and is defined by \eqref{eq:basic-variation}. It is shown in \cite{kl2014extension} that if $(v_0, v_1)$ are defined through \eqref{eq:v0v1-source} then
\begin{equation*}
v_1 =  Q_{0,T \, *}(q_0)v_0 + \int_0^T Q_{t, T \, *}(q(t;q_0,u)) \left(\what{f}(t, q(t;q_0,u),\nu(t)) - \what{f}(t, q(t;q_0,u),\delta_{u(t)}) \right) \, dt,
\end{equation*}
where $Q_{s,t}$ denotes the flow of vector field $(t,q) \mapsto \what{f}(t, q, \delta_{u(t)})$. We note that because of \eqref{eq:integrable-bdd}, the set $V(q_0, u)$ is bounded and certainly it is convex.

\begin{lemma}
\label{lem:uniform-bound}
There is a constant $c_3$ such that for any $(q,u) \in \cX^\circ$, $(v_0, v_1) \in V(q,u)$, the variation $q^\lambda$ defined by \eqref{eq:basic-variation} satisfies
  \begin{equation}
  \label{eq:difference-quotient-approx}
    \left\|\psi_T(q^\lambda(T)) - \psi_T(q^0(T)) - \lambda \psi_{T \, *} v_1 \right\|_E \le c_3 \lambda^2
  \end{equation}
  when $\lambda$ is small enough that $q^\lambda(t) \in P_{0,t} (\cO_0)$ for all $t$.
\end{lemma}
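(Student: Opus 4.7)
The plan is to work entirely in the Lagrangian chart and reduce \eqref{eq:difference-quotient-approx} to a Gronwall estimate for a second-order Taylor remainder. Writing $x^\lambda(t) := \psi_t(q^\lambda(t))$ and using \eqref{eq:dotx}, the variation satisfies $x^\lambda(0) = \varphi(q_0) + \lambda \varphi_*(q_0) v_0$ and
\begin{equation*}
\dot{x}^\lambda(t) = (1-\lambda)\,\what{g}(t,x^\lambda(t),\delta_{u(t)}) + \lambda\,\what{g}(t,x^\lambda(t),\nu(t)).
\end{equation*}
Let $v : [0,T] \to E$ be the solution to the linearized (variational) equation
\begin{equation*}
\dot{v}(t) = g_x(t,x^0(t),u(t))\,v(t) + \what{g}(t,x^0(t),\nu(t)) - \what{g}(t,x^0(t),\delta_{u(t)}), \quad v(0) = \varphi_*(q_0)v_0.
\end{equation*}
The variation-of-constants formula for this linear ODE, together with the explicit expression for $v_1$ recalled just before the lemma, gives $v(T) = \psi_{T\,*}v_1$, so it suffices to prove $\|x^\lambda(T) - x^0(T) - \lambda v(T)\|_E \le c_3 \lambda^2$.

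Define the remainder $w^\lambda(t) := x^\lambda(t) - x^0(t) - \lambda v(t)$, so that $w^\lambda(0) = 0$. Assumption ($D$) gives, by integration in any relaxed control value $\mu$ and Taylor's theorem,
\begin{equation*}
\|\what{g}(t,y,\mu) - \what{g}(t,x,\mu) - \what{g}_x(t,x,\mu)(y-x)\|_E \le \tfrac{1}{2} k_g(t)\,\|y-x\|_E^2,
\end{equation*}
while Lemma \ref{lem:sliding-bound} yields $\|x^\lambda(t) - x^0(t)\|_E \le c_0 \lambda$. Expanding $\dot{x}^\lambda - \dot{x}^0 - \lambda \dot{v}$ around $x^0$, the linear parts in $x^\lambda - x^0$ collapse to $g_x(t,x^0(t),u(t))\,w^\lambda(t)$ plus the cross term $\lambda[\what{g}_x(t,x^0(t),\nu(t)) - g_x(t,x^0(t),u(t))](x^\lambda(t) - x^0(t))$, which is bounded by $2 c_0 m_g(t)\,\lambda^2$ thanks to the estimate $\|g_x\|\le m_g(t)$ from \eqref{eq:integrable-bdd}. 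The Taylor remainders of both $\what{g}(t,\cdot,\delta_{u(t)})$ and $\what{g}(t,\cdot,\nu(t))$ contribute another $O(c_0^2 k_g(t)\,\lambda^2)$. Combining,
\begin{equation*}
\dot{w}^\lambda(t) = g_x(t,x^0(t),u(t))\,w^\lambda(t) + r^\lambda(t), \qquad \|r^\lambda(t)\|_E \le \bigl[2 c_0 m_g(t) + c_0^2 k_g(t)\bigr]\lambda^2.
\end{equation*}

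Because $\|g_x(t,x^0(t),u(t))\|_{L(E,E)} \le m_g(t) \in L^1$ and $w^\lambda(0)=0$, Gronwall's lemma applied to the integral form of this perturbed linear equation gives
\begin{equation*}
\|w^\lambda(T)\|_E \le \lambda^2 \bigl(2 c_0 \|m_g\|_{L^1} + c_0^2 \|k_g\|_{L^1}\bigr)\exp\bigl(\|m_g\|_{L^1}\bigr),
\end{equation*}
which is \eqref{eq:difference-quotient-approx} for an appropriate constant $c_3$ independent of $(q_0,u,v_0,\nu)$. The one place that needs attention is that every estimate must be uniform in both the ordinary control $u$ and the relaxed control $\nu$; this is precisely what Assumption ($D$) delivers, since its boundedness and Lipschitz estimates hold uniformly in $u \in \U$ and therefore transfer by integration to $\what{g}(t,\cdot,\mu)$ and $\what{g}_x(t,\cdot,\mu)$ for any probability measure $\mu$ on $\U$.
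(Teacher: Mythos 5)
Your proof is correct and follows essentially the same route as the paper's: reduce to the local coordinate estimate in the Lagrangian chart, introduce the solution $v$ of the linearized (variational) equation so that $v(T)=\psi_{T\,*}v_1$, control the Taylor remainders of $\what{g}(t,\cdot,\mu)$ by $\tfrac12 k_g(t)\|x^\lambda-x^0\|^2$ together with the bound $\|x^\lambda-x^0\|\le c_0\lambda$ from Lemma \ref{lem:sliding-bound}, and finish with Gronwall using $\|g_x\|\le m_g\in L^1$. The constants you obtain are uniform in $(q_0,u,v_0,\nu)$ for exactly the reason you note, so this supplies precisely the details the paper leaves to the reader.
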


\begin{proof}
  It is enough to prove that under Assumption $(D)$ we have the estimate
  \begin{equation}
  \label{eq:difference-quotient-approx-g}
  \left\|x^\lambda(T) - x(T) - \lambda \left. \frac{d x^\lambda}{d\lambda} (T)\right|_{\lambda = 0} \right\|_E \le c_3 \lambda^2,
  \end{equation}
  where $x^\lambda(t):= x(t;x_0 + \lambda \varphi_*(q) v_0, \mu^\lambda)$, $\mu^\lambda(t) := (1 - \lambda) \delta_{u(t)} + \lambda \nu(t)$, and $x(t) := x(t;x_0,u)$. We leave the details to the reader.

\comment{
  Let $v : \left[0,T\right] \rightarrow E$ be the curve defined by
  \begin{equation*}
    \dot{v}(t) = \what{g}_x(t, x(t), \delta_{u(t)}) v(t) + \what{g}(t, x(t), \nu(t)) - \what{g}(t, x(t), \delta_{u(t)})
  \end{equation*}
  with initial condition $v(0) = v_0$, so that $v(t) = \left.\frac{d}{d \lambda} \right|_{\lambda = 0} x^\lambda(t)$.
Setting
\begin{equation*}
\Delta \what{g}(\lambda, s) := \what{g}(s, x^\lambda(s), \nu(s)) - \what{g}(s, x^\lambda(s), \delta_{u(s)})
\end{equation*} we may write $\left\|x^\lambda(t) - x(t) - \lambda v(t)\right\|_E$ as
  \begin{equation*}
    \begin{aligned}
     &  \left\| \int_0^t  \left\{\what{g}(s, x^\lambda(s), \mu^\lambda(s)) - \what{g}(s, x(s), \delta_{u(s)}) - \lambda \what{g}_x(s, x(s), \delta_{u(s)}) v(s) - \lambda \Delta \what{g}(0,s) \right\}\, ds \right\|_E \\
      & \hspace{0.5cm} \le \left\| \int_0^t \left\{\what{g}(s, x^\lambda(s), \delta_{u(s)}) - \what{g}(s, x(s), \delta_{u(s)}) - \lambda \what{g}_x(s, x(s), \delta_{u(s)}) v(s) \right\}\, ds \right\|_E \\
      &\hspace{1cm} + \int_0^t \left\|  \lambda \Delta \what{g}(\lambda,s)- \lambda \Delta \what{g}(0,s) \right\|_E \, ds \\
      & \hspace{0.5cm} \le \left\| \int_0^t \what{g}(s, x^\lambda(s), \delta_{u(s)}) - \what{g}(s, x(s), \delta_{u(s)}) - \what{g}_x(s, x(s), \delta_{u(s)}) \left(x^\lambda(s) - x(s)\right) \, ds \right\|_E \\
      &\hspace{1cm} + \left\| \int_0^t \left\{  \what{g}_x(s, x(s), \delta_{u(s)}) \left(x^\lambda(s) - x(s)\right) - \lambda \what{g}_x(s, x(s), \delta_{u(s)}) v(s) \right\} \, ds \right\|_E \\
      & \hspace{1cm} +\lambda^2 c_0\left\|k_g\right\|_{L^1},
      \end{aligned}
  \end{equation*}
  with the last estimate following from Lemma \ref{lem:sliding-bound}. The standard mean value theorem for Banach spaces implies that if $\theta : E \rightarrow E$ is a function with Lipschitz derivative of rank $k_\theta$, then
  \begin{equation*}
    \left\|\theta(y) - \theta(x) - \theta_x(x)(y-x) \right\|_E \le \frac{k_\theta}{2} \left\|x - y\right\|_E^2.
  \end{equation*}
  With \eqref{eq:integrable-bdd} and \eqref{eq:integrable-lip} this implies that
  \begin{equation*}
    \begin{aligned}
         \left\|x^\lambda(t) - x(t) - \lambda v(t)\right\|_E & \le \frac12 \left\|k_g\right\|_{L^1} c_0^2 \lambda^2 +  \int_0^t m_g(s) \left\| x^\lambda(s) - x(s) - \lambda v(s)\right\|_E \, ds \\
         & \hspace{1cm} +\lambda^2 c_0 \left\|k_g\right\|_{L^1}.
    \end{aligned}
  \end{equation*}
The Gronwall lemma now proves \eqref{eq:difference-quotient-approx-g}, which completes the proof.}
\end{proof}

Finally, the following lemma provides us with a useful relationship between the set-constrained lower derivative and variations.
\begin{lemma}
  \label{lem:variations-dwv}
  Let $\ell : M \times M \rightarrow \R$ be locally Lipschitz, $(q,u) \in \cX^\circ$, and $V := c \ell V(q,u)$. For any $(v_0, v_1) \in V$, there exists a sequence of variations $q^\lambda_n$ of the form \eqref{eq:basic-variation} and a sequence $\lambda_n \downarrow 0$ such that
  \begin{equation*}
    \lim_{n \rightarrow \infty} \frac{ \ell(q^{\lambda_n}_n(0),q^{\lambda_n}_n(T)) - \ell(q,q(T;q,u))}{\lambda_n} = D^w_V \ell(q,q(T;q,u); v_0, v_1).
  \end{equation*}
\end{lemma}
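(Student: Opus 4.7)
The plan is to first extract a sequence that realizes $D^w_V \ell$ as a true limit, then approximate it in norm by elements of $V(q,u)$ itself, and finally promote each such element to a concrete variation of the form \eqref{eq:basic-variation} using the first-order approximation from Lemma \ref{lem:uniform-bound}. Throughout, I would work in the product chart $(\varphi,\psi_T)$ around $(q,q(T;q,u))$, identify tangent vectors with their coordinate representations via $\varphi_*(q)$ and $\psi_{T*}(q(T;q,u))$, and write $x := (\varphi(q),\psi_T(q(T;q,u)))$.

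From the definition of $D^w_V \ell$, I would first select sequences $(\omega_0^k,\omega_1^k) \in V$ with $(\omega_0^k,\omega_1^k) \weakly (v_0,v_1)$ and $\mu_k \downarrow 0$ satisfying
\[
\lim_{k\to\infty} \frac{\ell(x + \mu_k(\omega_0^k,\omega_1^k)) - \ell(x)}{\mu_k} = D^w_V \ell(x;v_0,v_1),
\]
passing to a subsequence if the infimum in the definition is achieved only as a $\liminf$. Because $V$ is the norm-closure of the convex set $V(q,u)$, for each $k$ I can pick $(\tilde v_0^k,\tilde v_1^k) \in V(q,u)$ with $\|(\tilde v_0^k,\tilde v_1^k) - (\omega_0^k,\omega_1^k)\|_{E\times E} < 1/k$; local Lipschitz continuity of $\ell$ shows that the difference quotient built from $(\tilde v_0^k,\tilde v_1^k)$ still tends to $D^w_V\ell(x;v_0,v_1)$, and $(\tilde v_0^k,\tilde v_1^k) \weakly (v_0,v_1)$ is automatic.

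Next I would realize each $(\tilde v_0^k,\tilde v_1^k)$ as an actual variation: by the definition of $V(q,u)$ there exist $w_0^k \in \cB_q$ and $\nu^k \in \cM$ generating a variation $q_k^\lambda$ of the form \eqref{eq:basic-variation} with $\frac{d}{d\lambda}|_{\lambda=0}(q_k^\lambda(0),q_k^\lambda(T)) = (\tilde v_0^k,\tilde v_1^k)$. The uniform constant in Lemma \ref{lem:sliding-bound} shows that for $\lambda$ below a threshold independent of $k$ the curve $q_k^\lambda$ stays in $P_{0,t}(\cO_0)$, so Lemma \ref{lem:uniform-bound} yields
\[
\|\psi_T(q_k^{\mu_k}(T)) - \psi_T(q(T;q,u)) - \mu_k \psi_{T*} \tilde v_1^k\|_E \le c_3 \mu_k^2,
\]
while $\varphi(q_k^{\mu_k}(0)) = \varphi(q) + \mu_k \varphi_*(q)\tilde v_0^k$ holds exactly by \eqref{eq:initial-variation}. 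Lipschitz continuity of $\ell$ in these coordinates then gives
\[
\left| \frac{\ell(q_k^{\mu_k}(0),q_k^{\mu_k}(T)) - \ell(x)}{\mu_k} - \frac{\ell(x + \mu_k(\tilde v_0^k,\tilde v_1^k)) - \ell(x)}{\mu_k} \right| \le k_\ell c_3 \mu_k \to 0.
\]
Discarding finitely many $k$ so that $\mu_k$ is below the threshold and setting $\lambda_n := \mu_{k_n}$, $q_n^{\lambda_n} := q_{k_n}^{\mu_{k_n}}$ along a suitable subsequence delivers the desired limit.

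The main obstacle I anticipate is the bookkeeping that bridges the abstract setting of the Dini derivative (arbitrary weakly convergent sequences in $V$) and the much more structured setting of variations \eqref{eq:basic-variation}. Norm-density of $V(q,u)$ in $V$ combined with Lipschitz continuity of $\ell$ handles the weak approximation, and the quadratic remainder supplied by Lemma \ref{lem:uniform-bound} is precisely what absorbs the gap between the honest endpoint perturbation $\psi_T(q_k^{\mu_k}(T)) - \psi_T(q(T;q,u))$ and its linearization $\mu_k \psi_{T*}\tilde v_1^k$; without that quadratic control the argument would collapse to an inequality rather than the claimed equality.
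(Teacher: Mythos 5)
Your proposal is correct and follows essentially the same route as the paper's proof: realize $D^w_V\ell$ by a weakly convergent sequence, reduce to elements of $V(q,u)$, build the corresponding variations \eqref{eq:basic-variation}, and use the quadratic remainder of Lemma \ref{lem:uniform-bound} together with the Lipschitz property of $\ell$ to replace the linearized endpoint by the true one. The only difference is that you spell out the step the paper dismisses as ``without loss of generality'' (norm-density of the convex set $V(q,u)$ in its closure plus Lipschitz continuity), which is a welcome clarification rather than a deviation.
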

\begin{proof}
Let $(v_0^n, v_1^n) \weakly (v_0, v_1)$ and $\lambda_n \downarrow 0$ be chosen so that
\begin{equation}
\label{eq:a-bit-of-a-mess}
 \begin{aligned}
 & D^w_V \ell(q,q(T;q,u); v_0, v_1) \\
  & \hspace{.5cm} = \lim_{n \rightarrow \infty} \frac{\wtilde{\ell}\left(\psi_0(q) + \lambda_n \psi_{0 \, *} v_0^n, \psi_T(q(T;q,u)) + \lambda_n \psi_{T \, *}v_1^n\right) - \wtilde{\ell}\left(\psi_0(q), \psi_T(q(T;q,u))\right)}{\lambda_n},
 \end{aligned}
\end{equation}
where $\wtilde{\ell} := \ell \circ \left(\psi_0 \times \psi_T\right)^{-1}$. Without loss of generality we may assume that $(v_0^n, v_1^n) \in V(q,u)$. Let $q^\lambda_n$ be a variation of type \eqref{eq:basic-variation} corresponding to $(v_0^n, v_1^n)$. By Lemma \ref{lem:uniform-bound} we can write \eqref{eq:a-bit-of-a-mess} as
\begin{equation*}
 \begin{aligned}
 & D^w_V \ell(q,q(T;q,u); v_0, v_1) \\
  & \hspace{.5cm} = \lim_{n \rightarrow \infty} \frac{\wtilde{\ell}\left(\psi_0(q) + \lambda_n \psi_{0 \, *} v_0^n, \psi_T(q^{\lambda_n}_n(T))\right) - \wtilde{\ell}\left(\psi_0(q), \psi_T(q(T;q,u))\right)}{\lambda_n} \\
  & \hspace{.5cm} = \lim_{n \rightarrow \infty} \frac{\ell\left(q^{\lambda_n}_n(0),q^{\lambda_n}_n(T)\right) - \ell(q,q(T;q,u))}{\lambda_n}.\end{aligned}
\end{equation*}
\end{proof}

\emph{Proof of Theorem \ref{thm:pmp} for $S = M \times M$:}
We assume that $(q_0, u_*)$ is optimal and write $q_*(t) := q(t;q_0, u_*)$. Let $V:=c \ell \, V(q_0, u_*)$. Since $(q_0, u_*)$ is optimal, Lemma \ref{lem:variations-dwv} implies that $D^w_V \ell(q_*(0), q_*(T); v_0, v_1) \ge 0$ for all $(v_0, v_1) \in V$.

Proposition \ref{prop:lower-bound} now implies the existence of $(\xi, \eta) \in \pL \ell(q_0, q(T;q_0, u_*))$ such that for all $(v_0, v_1) \in V(q_0, u_*)$ we have
\begin{equation}
\label{eq:main-variational-tool}
  \left<\xi, v_0\right> + \left<\eta, v_1\right> \ge 0.
\end{equation}
In the definition of $v_1$, we may take $\nu = \delta_{u_*(t)}$ and so find that for all $v_0 \in \cB_{q_0}$, $\left<\xi, v_0\right> + \left<\eta, P_{0,T \, *} v_0\right> \ge 0$.
It follows that $\xi = -P_{0, T}^* \eta$. Setting $\zeta(t) := - P_{t, T}^*\eta$ we find that $(\zeta(0), -\zeta(T)) \in \pL \ell(q_0, q(T;q_0, u_*))$. Moreover, taking $v_0 = 0$ in \eqref{eq:main-variational-tool} we arrive at \eqref{eq:int-pmp} and this completes the proof. \qed

\section{Controllability and Metric Regularity} \label{section:metric-regularity}

In the case where the endpoints are constrained, we cannot take quite the same approach and instead use a metric regularity argument based on the following form of the Ekeland principle:

\begin{proposition}[Ekeland]
\label{prop:ekeland}
Let $(\cX,\rho)$ be a complete pseudometric space and let $\ell : \cX \rightarrow \ol{\R}$ be lower semicontinuous. Suppose that $\ol{x}$ satisfies $\ell(\ol{x}) < \inf_{x \in \cX} \ell(x) + \ve$ for some $\ve > 0$. For any $\sigma > 0$ there exists $y \in \cX$ such that $\rho(\ol{x},y) < \sigma$ and which is a global minimizer of the perturbed function
\begin{equation}
  z \mapsto \ell(z) + \frac{\ve}{\sigma} \rho(y,z).
\end{equation}
\end{proposition}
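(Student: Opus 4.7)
The plan is to adapt the classical recursive-minimization proof of Ekeland's variational principle, which carries over to a pseudometric space with only cosmetic changes. After rescaling $d := (\ve/\sigma)\rho$, the goal becomes: produce $y \in \cX$ with $d(\ol{x}, y) < \ve$ minimizing $z \mapsto \ell(z) + d(y, z)$ globally. One may assume $\inf_{\cX}\ell > -\infty$, since otherwise the hypothesis $\ell(\ol{x}) < \inf \ell + \ve$ is vacuous.

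First I would construct a sequence $(x_n)$ inductively, starting from $x_0 := \ol{x}$, by setting
\[ F_n := \{z \in \cX : \ell(z) + d(x_n, z) \le \ell(x_n)\} \]
and selecting $x_{n+1} \in F_n$ with $\ell(x_{n+1}) \le \inf_{F_n} \ell + 2^{-(n+1)}$. Each $F_n$ is nonempty (it contains $x_n$) and closed, by the lower semicontinuity of $\ell$ and continuity of $d(x_n, \cdot)$, so the selection is possible. A routine induction using the triangle inequality shows $x_m \in F_n$ whenever $m > n$, and telescoping the defining inequality gives the key estimate $d(x_n, x_m) \le \ell(x_n) - \ell(x_m)$. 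Since $(\ell(x_n))$ is decreasing and bounded below by $\inf\ell$, the sequence $(x_n)$ is Cauchy in $\rho$, and completeness furnishes a limit $y \in \cX$.

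To verify the two conclusions, continuity of $d(x_n, \cdot)$ gives $d(x_n, y) = \lim_m d(x_n, x_m) \le \ell(x_n) - \ell_\infty$, where $\ell_\infty := \lim\ell(x_n)$; lower semicontinuity gives $\ell(y) \le \ell_\infty$; hence $d(x_n, y) + \ell(y) \le \ell(x_n)$ for every $n$. Specializing to $n = 0$ yields $d(\ol{x}, y) \le \ell(\ol{x}) - \inf\ell < \ve$, i.e.\ $\rho(\ol{x}, y) < \sigma$. For the minimization property, suppose for contradiction that $\ell(z) + d(y, z) < \ell(y)$ for some $z \in \cX$. Combining the triangle inequality $d(x_n, z) \le d(x_n, y) + d(y, z)$ with the estimate just derived, one obtains $\ell(z) + d(x_n, z) < d(x_n, y) + \ell(y) \le \ell(x_n)$, so $z \in F_n$ for every $n$. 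The recursive choice of $x_{n+1}$ then forces $\ell_\infty \le \ell(z)$, whereas $\ell(z) < \ell(y) \le \ell_\infty$, a contradiction.

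No serious obstacle specific to the pseudometric setting appears: the argument uses only the triangle inequality, continuity of $d(x_n, \cdot)$, and preservation of lower semicontinuity under $\rho$-convergence. Cauchy limits are not unique when $\rho$ is only a pseudometric, but this is immaterial --- the proof only requires \emph{existence} of some limit, and the conclusion depends on $y$ only through $\rho$-distances, which are constant on pseudometric equivalence classes.
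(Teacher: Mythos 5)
Your argument is correct and complete. Note, however, that the paper offers no proof of Proposition~\ref{prop:ekeland} at all: it simply records that the metric-space version is due to \cite{ekeland1974variational}, that the pseudometric extension was established in \cite{ledyaev-doc-diss}, and that the proof in \cite{borwein1986} can be adapted. Your write-up supplies exactly such an adaptation, and it is the standard one: the recursively defined nested sets $F_n$ with near-minimizing selections, the telescoped estimate $d(x_n,x_m)\le \ell(x_n)-\ell(x_m)$ giving Cauchyness, and the contradiction argument for global minimality. You also correctly isolate the only two points where the pseudometric setting could conceivably cause trouble --- non-uniqueness of Cauchy limits and lower semicontinuity along $\rho$-convergent sequences --- and dispose of both: existence of a limit suffices, and every quantity in the conclusion depends on $y$ only through $\rho$-distances and through $\ell(y)$, for which the one-sided inequality $\ell(y)\le\ell_\infty$ is all that is used. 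The small rescaling $d:=(\ve/\sigma)\rho$ at the outset cleanly converts the two-parameter statement into the one-parameter one, and the vacuity remark for $\inf_{\cX}\ell=-\infty$ is the right way to justify the finiteness assumption. In short, you have filled in a proof the authors delegated to the literature, and done so correctly.
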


Proposition \ref{prop:ekeland} was introduced for complete metric spaces in \cite{ekeland1974variational} and it was shown in \cite{ledyaev-doc-diss} that the principle holds for complete pseudometric spaces. The paper \cite{borwein1986} contains a proof in the setting of metric spaces which can be adopted to establish the above statement.

For this paper, of course, $(\cX, \rho)$ is the pseudometric space defined in Section \ref{pseudometric-subsection}.
It can be shown, using pointwise a.e. convergence of subsequences in $L^1$ and existence of measurable selections complete, separable metric space $\U$, that the space $(\cX, \rho)$ is a complete pseudometric space.

Metric regularity of constraints \eqref{eq:dynamic-constraint} and \eqref{eq:state-constraint} is closely related to a concept of controllability. To describe this concept, we need a notion of distance toward the set $S \subset M \times M$ and it is convenient to work with a function $\Phi : \cO \times P_{0,T}(\cO) \rightarrow \R$ by
\begin{equation*}
  \Phi(q, q^\prime) = d_{(\psi_0 \times \psi_T)(S)}\left(\psi_0(q), \psi_T(q^\prime)\right).
\end{equation*}
By a result of \cite{ledyaevzhu2007}, one may check that $\pL \Phi(q, q^\prime) \subset N_S^L(q, q^\prime)$ when $(q,q^\prime) \in S$.

\begin{definition}
\label{defn:weakly-controllable}
  A pair $(q_0, u_*)$ is said to be \emph{weakly controllable in the direction of $S$} if there exist $\ve_0, \Delta > 0$ such that for any pair $(q_1, u)$ satisfying $\rho(q_0,u_*; q_1, u) < \ve_0$ there exists $(v_0, v_1) \in V(q_1,u)$ such that
  \begin{equation}
  \label{eq:strong-decrease}
  D^w_{c \ell \, V(q_1,u)} \Phi(q_1, q(T;q_1, u) ; v_0, v_1) < - \Delta.
  \end{equation}
\end{definition}

We write $\cA \subset \cX$ for the set of pairs $(q_0, u)$ which satisfy $(q_0, q(T;q_0, u)) \in S$.

\begin{proposition}
Suppose that $(q_0, u_*) \in \cA$, $S$ is sequentially normally compact in a neighborhood of $(q_0, q(T;q_0, u_*))$, and that $(q_0, u_*)$ is not weakly controllable in the direction of $S$. Then Theorem \ref{thm:pmp} holds with $\lambda_0 = 0$.
\end{proposition}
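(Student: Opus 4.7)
The approach is to adapt the proof of Theorem \ref{thm:pmp} in the free-endpoints case, replacing the cost $\ell$ by the penalty $\Phi$ (which will enforce $\lambda_0 = 0$) and relying on the sequential normal compactness hypothesis to secure a nontrivial adjoint. Throughout, all subgradient and normal-cone calculations are carried out in the Lagrangian chart $(P_{0,t}(\cO_0), \psi_t)$ built around the nominal trajectory $q_*$, so that we may work in the reflexive space $E$.

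First, from the failure of weak controllability I extract, for each integer $n \ge 1$, a pair $(q_n, u_n) \in \cX^\circ$ with $\rho(q_0, u_*; q_n, u_n) < 1/n$ such that
\begin{equation*}
D^w_{c\ell\, V(q_n, u_n)} \Phi\bigl(q_n, q(T;q_n, u_n); v_0, v_1\bigr) \ge -1/n \qquad \forall\, (v_0, v_1) \in V(q_n, u_n).
\end{equation*}
Proposition \ref{prop:lower-bound} applied to $\Phi$ at $(q_n, q(T;q_n, u_n))$ then produces covectors $(\xi_n, \eta_n) \in \pL \Phi(q_n, q(T;q_n, u_n))$ satisfying $\left<\xi_n, v_0\right> + \left<\eta_n, v_1\right> \ge -1/n$ for every $(v_0, v_1) \in V(q_n, u_n)$. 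By the subgradient formula for distance functions recorded before Section 2, each $(\xi_n, \eta_n)$ is a limiting normal to $S$ at a nearby point $(\hat q_n, \hat q_n^\prime)$; since $\rho \to 0$ forces uniform convergence of the trajectories (apply Gronwall to the $g$-integral in the definition of $\rho$), $(\hat q_n, \hat q_n^\prime) \to (q_0, q_*(T))$.

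Next I pass to a weak limit. After normalizing so that $\|(\xi_n, \eta_n)\|_{E^* \times E^*} = 1$ (the right-hand side of the inequality becomes $-\kappa_n$ with $\kappa_n \to 0$), reflexivity of $E$ lets me extract weak limits $\xi_n \weakly \xi$, $\eta_n \weakly \eta$, and closedness of $N_S^L$ under weak limits yields $(\xi, \eta) \in N_S^L(q_0, q_*(T))$. A diagonal selection of Fr\'echet-normal approximants of $(\xi_n, \eta_n)$ with base points still converging to $(q_0, q_*(T))$, together with the SNC hypothesis, rules out $(\xi, \eta) = 0$. Write $Q^n_{s,t}$ for the flow of $(t,q) \mapsto \what{f}(t,q,\delta_{u_n(t)})$ and $P_{s,t}$ for that of $(t,q) \mapsto f(t,q,u_*(t))$. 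Testing the inequality with $\nu = \delta_{u_n(t)}$ (so that $v_1 = Q^n_{0,T\,*}(q_n) v_0$) and exploiting the symmetry $v_0 \mapsto -v_0$ on $\cB_{q_n}$ gives $\|\xi_n + (Q^n_{0,T})^* \eta_n\|_{E^*} \le 2/n$. The $L^1$-control of $g$ and $g_x$ built into $\rho$ delivers strong operator convergence $Q^n_{0,T\,*} \to P_{0,T\,*}$, so passing to the weak limit yields $\xi = -P_{0,T}^* \eta$. Setting $\zeta(t) := -P_{t,T}^* \eta$, Proposition \ref{prop:pull-back-flow} produces \eqref{eq:adjoints} with $(\zeta(0), -\zeta(T)) = (\xi, \eta) \in N_S^L(q_0, q_*(T))$. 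Testing with $v_0 = 0$ and arbitrary $\nu \in \cM$, then passing to the limit, yields \eqref{eq:int-pmp}, whence Proposition \ref{prop:int-pmp} delivers \eqref{eq:max-princ}. Finally $(\xi, \eta) \ne 0$ and $\xi = -P_{0,T}^* \eta$ force $\eta \ne 0$; invertibility of $P_{t,T}^*$ then gives $\zeta(t) \ne 0$ for every $t$, so the conclusion of Theorem \ref{thm:pmp} holds with $\lambda_0 = 0$.

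The main obstacle is the limit passage $\xi_n + (Q^n_{0,T})^* \eta_n \to \xi + P_{0,T}^* \eta$: it requires reconciling the $L^1$-strong convergence of the data $(g, g_x)$ along the $\rho$-Cauchy sequence $(q_n, u_n)$ with the merely weak convergence of the covectors, which needs uniform operator bounds on the pushforwards $Q^n_{s,t\,*}$ together with a strong-weak continuity argument of the standard type $(A_n \to A$ strongly, $\eta_n \weakly \eta) \Rightarrow A_n^* \eta_n \weakly A^* \eta$. A secondary technical point is the diagonal SNC step, since the paper's SNC property is formulated for Fr\'echet normals while the $(\xi_n, \eta_n)$ are only limiting; a careful selection of Fr\'echet approximants whose base points still converge to $(q_0, q_*(T))$ and whose norms remain bounded below is required before the hypothesis can be invoked.
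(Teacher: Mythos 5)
Your argument is correct and follows essentially the same route as the paper: negate the controllability condition to produce a sequence $(q_n,u_n)$ with $\rho(q_0,u_*;q_n,u_n)\to 0$, apply Proposition \ref{prop:lower-bound} to $\Phi$ to obtain covectors $(\xi_n,\eta_n)\in\pL\Phi(q_n,q(T;q_n,u_n))$ lying in limiting normal cones to $S$, use reflexivity together with sequential normal compactness to extract a nonzero weak limit $(\xi,\eta)\in N_S^L(q_0,q_*(T))$, and then repeat the free-endpoint reasoning to obtain the adjoint arc, the maximum condition, and nonvanishing of $\zeta$ with $\lambda_0=0$. The only differences are presentational: you establish $\xi_n=-(Q^n_{0,T})^*\eta_n+O(1/n)$ termwise and pass to the limit via strong operator convergence of the flows, and you make explicit the Fr\'echet-versus-limiting-normal diagonal selection in the SNC step, both of which the paper leaves implicit.
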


\begin{proof}
For any $\ve_n, \Delta_n \downarrow 0$ there exists a sequence of pairs $(q_n,u_n)$ with $\rho(q_0,u_*; q_n,u_n) < \ve_n$ and $(q_n,q(T;q_n, u_n)) \not \in S$ such that for all $(v_0, v_1) \in V(q_n, u_n)$ we have
\begin{equation*}
  D^w_{c \ell \, V(q_n,u_n)} \Phi(q_n,q(T;q_n, u_n); v_0,v_1) \ge - \Delta_n
\end{equation*}
for all $n$. Consequently there exist $(\xi_n,\eta_n) \in \pL \Phi(q_n,q(T;q_n, u_n))$ such that for all $(v_0, v_1) \in V(q_n,u_n)$ we have
\begin{equation*}
  \left<\xi_n, v_0\right> + \left<\eta_n, v_1\right> \ge -\Delta_n.
\end{equation*}
Since $S$ is sequentially normally compact we may assume without loss of generality that $(\xi_n,\eta_n)$ are nonzero. Since $E$ is reflexive, we may pass to the limit and obtain nonzero $(\xi, \eta) \in N_S^L(q_0, q(T;q_0, u_*))$ such that
\begin{equation*}
  \left<\xi, v_0\right> + \left<\eta, v_1\right> \ge 0
\end{equation*}
for all $(v_0, v_1) \in V(q_0, u_*)$. Following the same line of reasoning as in the paragraph following \eqref{eq:main-variational-tool}, we find that $\zeta(t) := -Q_{t,T}^*\eta$ satisfies \eqref{eq:adjoints} and \eqref{eq:max-princ}, where $Q_{s,t}$ is the flow of $(t,q) \mapsto f(t, q, u_*(t))$. Since $Q_{s,t}^*$ is an isomorphism, we see that $\zeta(t) \ne 0$ for all $t$, completing the proof.
\end{proof}

Next we consider the case in which an admissible pair is weakly controllable toward $S$. Let
\begin{equation*}
  \rho_{\cA}(q,u) :=   \inf \left\{ \rho(q, u; q^\prime, w) \, : \, (q^\prime,w) \in \cA \right\}
\end{equation*}
\begin{proposition}
Suppose that $(q_0, u_*) \in \cA$ is weakly controllable in the direction of $S$. Then there exist constants $\ve_1 > 0$ and $\kappa > 0$ such that for any $(q, u)$ with $\rho(q_0, u_*; q, u) < \ve_1$ we have
\begin{equation}
\label{eq:metric-regularity}
\rho_{\cA}(q,u) \le \kappa \Phi(q, q(T;q, u)).
\end{equation}
\end{proposition}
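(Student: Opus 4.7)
The plan is to argue by contradiction using the Ekeland principle (Proposition \ref{prop:ekeland}) on the complete pseudometric space $(\cX, \rho)$, applied to the continuous cost $J(q,u) := \Phi(q, q(T;q,u))$. Note that $J$ is continuous on $\cX$: $\Phi$ is locally Lipschitz, and by Gronwall the map $(q,u) \mapsto q(T;q,u)$ is $\rho$-continuous. Since $(q_0, u_*) \in \cA$ we have $J(q_0, u_*) = 0$, so $\inf_\cX J = 0$. Fix $\Delta_0 \in (0, \Delta/c_1)$, where $\Delta$ is the decrease rate from Definition \ref{defn:weakly-controllable} and $c_1$ is the pseudometric constant from Lemma \ref{lem:pseudo-approx}.

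Suppose the conclusion fails: there are sequences $\kappa_n \to \infty$ and $(q_n, u_n) \in \cX$ with $\rho(q_0, u_*; q_n, u_n) \to 0$ and $\rho_\cA(q_n, u_n) > \kappa_n \Phi_n$, where $\Phi_n := J(q_n, u_n)$. If $\Phi_n = 0$ then $(q_n, u_n) \in \cA$ and $\rho_\cA(q_n, u_n) = 0$, violating the strict inequality; hence $\Phi_n > 0$ and, by continuity of $J$, $\Phi_n \to 0$. Apply Ekeland at $(q_n, u_n)$ with slack $\varepsilon := 2 \Phi_n$ and $\sigma_n := 2\Phi_n / \Delta_0$ to obtain $(q'_n, u'_n)$ with $\rho(q_n, u_n; q'_n, u'_n) < \sigma_n$ minimizing $(q,u) \mapsto J(q,u) + \Delta_0 \, \rho(q'_n, u'_n; q, u)$. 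One checks $(q'_n, u'_n) \notin \cA$: otherwise $\kappa_n \Phi_n < \rho_\cA(q_n, u_n) \le \rho(q_n, u_n; q'_n, u'_n) < 2\Phi_n/\Delta_0$, impossible for $\kappa_n > 2/\Delta_0$. Moreover $\rho(q_0, u_*; q'_n, u'_n) \to 0$, so weak controllability eventually applies at $(q'_n, u'_n)$: there exists $(v_0, v_1) \in V(q'_n, u'_n)$ with $D^w_{c \ell \, V(q'_n, u'_n)} \Phi(q'_n, q(T; q'_n, u'_n); v_0, v_1) < -\Delta$.

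Next, realize this strong decrease by a concrete variation. By Lemma \ref{lem:variations-dwv} choose relaxed variations $(q'_n)^{\lambda_k}$ of type \eqref{eq:basic-variation} and $\lambda_k \downarrow 0$ so that $\lambda_k^{-1}\bigl[\Phi\bigl((q'_n)^{\lambda_k}(0), (q'_n)^{\lambda_k}(T)\bigr) - \Phi\bigl(q'_n, q(T; q'_n, u'_n)\bigr)\bigr]$ tends to a limit strictly less than $-\Delta$. Then approximate each relaxed variation by an admissible control $w^{\lambda_k} \in \cU$ via Lemma \ref{lem:pseudo-approx} with auxiliary tolerance $\varepsilon_k$ chosen so $\varepsilon_k/\lambda_k \to 0$; local Lipschitzness of $\Phi$ yields the same limit for $\lambda_k^{-1}\bigl[J\bigl((q'_n)^{\lambda_k}_0, w^{\lambda_k}\bigr) - J(q'_n, u'_n)\bigr]$. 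Plugging the comparison point $\bigl((q'_n)^{\lambda_k}_0, w^{\lambda_k}\bigr)$ into the Ekeland minimality inequality, using the bound $\rho\bigl((q'_n)^{\lambda_k}_0, w^{\lambda_k}; q'_n, u'_n\bigr) < c_1 \lambda_k + c_2 \varepsilon_k$ from Lemma \ref{lem:pseudo-approx}, dividing by $\lambda_k$, and sending $k \to \infty$ yields $0 \le -\Delta + \Delta_0 c_1 < 0$, the desired contradiction.

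The main obstacle is reconciling the two approximations required for the Ekeland comparison: the weak lower Dini decrease is witnessed only along relaxed controls, whereas the Ekeland minimizer must be compared against genuine elements of $\cX$, i.e.\ pairs with measurable controls. Lemma \ref{lem:pseudo-approx} provides the bridge with a quantitative pseudometric bound $c_1 \lambda + c_2 \varepsilon$, which is precisely calibrated so that the choice $\Delta_0 < \Delta/c_1$ prevents the penalty term from absorbing the strong decrease.
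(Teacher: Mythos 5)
Your proposal is correct and follows essentially the same route as the paper's proof: a contradiction argument via the Ekeland principle on $(\cX,\rho)$ with the penalty coefficient calibrated strictly below $\Delta/c_1$, followed by realizing the weak-controllability decrease through Lemma \ref{lem:variations-dwv} and passing to genuine controls with Lemma \ref{lem:pseudo-approx} to reach $0 \le -\Delta + \Delta_0 c_1 < 0$. The only differences are cosmetic: the paper fixes $\kappa = 4c_1/\Delta$ explicitly and works with a single hypothetical bad pair rather than sequences, while you are somewhat more explicit about transferring the decrease from the relaxed variation to its control approximation via the Lipschitz property of $\Phi$ and the choice $\varepsilon_k/\lambda_k \to 0$.
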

\begin{proof}
Since $(q_0, u_*)$ is weakly controllable toward $S$ we may choose $\ve_0, \Delta > 0$ such that for any $(q,u)$ with $\rho(q_0, u_*; q, u) < \ve_0$ we have \eqref{eq:strong-decrease}.
Set $\kappa := 4 c_1 / \Delta$ and choose $0 < \ve_1 < \ve_0/2$ small enough that for all $\rho(q_0, u_*; q,u) < \ve_1$ we have $(q,u) \in \cX^\circ$ and $\kappa \Phi(q,q(T;q,u)) < \ve_0 / 2$.

Suppose by way of contradiction that for some pair $(\ol{q}, \ol{u})$ satisfying $\rho(q_0, u_*; \ol{q},\ol{u}) < \ve_1$ we have
  \begin{equation*}
\rho_{\cA}(\ol{q}, \ol{u}) > \kappa \Phi(\ol{q}, q(T;\ol{q}, \ol{u})).
  \end{equation*}
  Necessarily, $(\ol{q}, \ol{u}) \not \in \cA$.
Applying Proposition \ref{prop:ekeland} with
  \begin{equation*}
    \sigma := \kappa \Phi(\ol{q}, q(T;\ol{q}, \ol{u})) \hspace{1cm} \ve := 2 \Phi(\ol{q}, q(T;\ol{q}, \ol{u}))
  \end{equation*}
  we obtain $(q,u) \in \cX$ with $\rho(q,u; \ol{q}, \ol{u}) < \kappa \Phi(\ol{q}, q(T;\ol{q}, \ol{u}))$ for which the perturbed functional
  \begin{equation*}
    (q^\prime,w) \mapsto \Phi(q^\prime,q(T;q^\prime,w)) + \frac{2}{\kappa} \rho(q,u;q^\prime,w)
  \end{equation*}
  is minimized. Check that $(q,u) \in \cX^\circ$ and $(q,u) \not \in \cA$. As a consequence we may choose $(v_0, v_1) \in  V(q,u)$ such that
\begin{equation*}
  D^w_V(q, q(T;q,u);v_0, v_1) \le - \Delta
\end{equation*}
and by Lemma \ref{lem:variations-dwv} we may choose a sequence of variations $q_n^\lambda$ and a sequence $\lambda_n\downarrow 0$ with
\begin{equation}
\label{eq:strong-decrease-phi}
  \lim_{n \rightarrow \infty} \frac{\Phi(q^{\lambda_n}_n(0),q^{\lambda_n}_n(T)) - \Phi(q, q(T;q,u))}{\lambda_n} \le - \Delta.
\end{equation}
By Lemma \ref{lem:pseudo-approx} we can assume that $q^{\lambda_n}$ corresponds to usual control $w^{\lambda_n}$ satisfying
\begin{equation*}
  \rho(q^{\lambda_n}_n(0), w^{\lambda_n} ; q, u) \le c_1 \lambda + c_2 \lambda^2.
\end{equation*}
Now
\begin{equation*}
    \Phi(q, q(T;q,u)) \le \Phi(q^{\lambda_n}_n(0),q^{\lambda_n}_n(T)) + \frac{2}{\kappa} c_1 \lambda_n  +  \frac{2}{\kappa} c_2 \lambda_n^2
\end{equation*}
and so \eqref{eq:strong-decrease-phi} implies
\begin{equation*}
  0 \le - \Delta + \frac{2}{\kappa}c_1 = - \frac{\Delta}{2} < 0.
\end{equation*}
This contradiction proves that \eqref{eq:metric-regularity} holds for $\rho(q_0,u_*; q,u) < \ve_1$ and so completes the proof.
\end{proof}

A standard exact penalization argument (see e.g. \cite{kl2014}) now implies that if $(q_0, u_*)$ is weakly controllable in the direction of $S$ then it is locally optimal (in $\rho$ pseudometric) for the problem of minimizing a cost
\begin{equation*}
  J(q,u) := \ell(q, q(T;q,u)) + K \Phi(q, q(T;q,u))
\end{equation*}
when $K$ is sufficiently large. Here constraint \eqref{eq:state-constraint} has been removed through exact penalization. This proves Theorem \ref{thm:penalization} and since we have shown Theorem \ref{thm:pmp} to be true for the free endpoints problem we now see that if $(q_0, u_*)$ is weakly controllable in the direction of $S$, then Theorem \ref{thm:pmp} holds with $\lambda_0 = 1$.

\end{document}